\documentclass[ ]{elsarticle}

\newtheorem{theorem}{Theorem}
\newtheorem{lemma}[theorem]{Lemma}

\newtheorem{remark}{Remark}
\bibliographystyle{elsarticle-num-names}

\usepackage{curves,amsmath,amssymb,color,fancybox,exscale,lineno,hyperref}
\usepackage{bm}
\usepackage{mathabx}
\usepackage{subfig}

\definecolor{dark}{gray}{0.6}
\definecolor{light}{gray}{0.8}

\def\norm#1{|\!| #1 |\!|}

\def\enorm#1{|\!|\!| #1 |\!|\!|}

\def\b1{{\mathbf 1}}
\def\ba{{\mathbf a}}
\def\bv{{\mathbf v}}
\def\bu{{\mathbf u}}

\def\bb{{\mathbf b}}
\def\bc{{\mathbf c}}

\def\br{{\mathbf r}}
\def\bs{{\mathbf s}}
\def\bd{{\mathbf d}}
\def\be{{\mathbf e}}
\def\bp{{\mathbf p}}
\def\bq{{\mathbf q}}
\def\bx{{\mathbf x}}
\def\by{{\mathbf y}}

\def\bn{{\mathbf n}}

\def\bS{{\mathbf S}}
\def\bH{{\mathbf H}}

\def\btau{{\boldsymbol \tau}}


\renewcommand{\P}{{\mathcal P}}


%
%


\def\Nedelec{N\'ed\'elec\ }


\def\rebAuthor{Randolph E.~Bank}

\def\psvAuthor{Panayot S.~Vassilevski}

\def\ltzAuthor{Ludmil T.~Zikatanov}

\def\rebAddress{Department of Mathematics, University of California, San Diego,
  La Jolla, CA~92093.}

\def\psvAddress{Center for Applied Scientific Computing, Lawrence Livermore
  National Laboratory,  P.O. Box 808, Mail Stop L-561, Livermore,
  CA~94551.} 

\def\ltzAddressa{Department of Mathematics, The Pennsylvania State University,  
  University Park, PA~16802.}
\def\ltzAddressb{Institute of Mathematics and Informatics, Bulgarian
  Academy of Sciences, 1113 Sofia, Bulgaria.}

\def\rebEmail{rbank@ucsd.edu}
\def\psvEmail{panayot@llnl.gov}
\def\ltzEmail{ludmil@psu.edu}

\def\rebThanks{National Science Foundation under  contract DMS-1318480.}

\def\ltzThanks{National Science Foundation under  contracts DMS-1418843 and DMS-1522615.}

\def\myTitle{Arbitrary Dimension Convection-Diffusion Schemes for 
  Space-Time Discretizations}

\def\myKeywords{space-time formulation, convection-diffusion problems, finite-element method, exponential fitting, streamline-diffusion}

\def\myAbstract{ 
  This note proposes embedding a time dependent PDE
  into a convection-diffusion type PDE (in one space dimension higher)
  with singularity, for which two discretization schemes, the
  classical streamline-diffusion and the EAFE (edge average finite
  element) one, are investigated in terms of stability and error
  analysis. The EAFE scheme, in particular, is extended to be
  arbitrary order which is of interest on its own. Numerical results,
  in combined space-time domain demonstrate the feasibility of the
  proposed approach.  
}

\begin{document}

\makeatletter
\def\@author#1{\g@addto@macro\elsauthors{\normalsize%
    \def\baselinestretch{1}%
    \upshape\authorsep#1\unskip\textsuperscript{%
      \ifx\@fnmark\@empty\else\unskip\sep\@fnmark\let\sep=,\fi
      \ifx\@corref\@empty\else\unskip\sep\@corref\let\sep=,\fi
    }%
    \def\authorsep{\unskip,\space}%
    \global\let\@fnmark\@empty
    \global\let\@corref\@empty  
    \global\let\sep\@empty}%
  \@eadauthor={#1}
}
\makeatother

\begin{frontmatter}
  \title{\myTitle\tnoteref{ztitlenote}} \tnotetext[ztitlenote]{This
    work was performed under the auspices of the U.S. Department of
    Energy by Lawrence Livermore National Laboratory under Contract
    DE-AC52-07NA27344.  The work of the first author was supported
    in part by the \rebThanks and the work of the third author was
    supported in part by the \ltzThanks}

  \author{\rebAuthor}
  \address{\rebAddress\, {Email:}\rebEmail}

  \author{\psvAuthor\corref{cor1}}
  \address{\psvAddress\, {Email:}\psvEmail}   

  \author{\ltzAuthor}
  \address{\ltzAddressa\, {Email:}\ltzEmail\\ \ltzAddressb}

  \cortext[cor1]{Corresponding Author}
  \begin{abstract}\myAbstract\end{abstract}
  \begin{keyword}\myKeywords\end{keyword}

\end{frontmatter}



\section{Introduction}

The embedding of time-dependent problems into a one space dimension
higher stationary problem is not a new idea. It has many appealing
properties, such as: using already existing tools developed for
stationary problems; using adaptive methods with reliable and
efficient error control; the ability to use existing efficient solver
libraries developed for stationary problems. There is, however, a
drawback: typically, the memory needed to run a simulation using the
combined space-time discretization approach is increased by an order
of magnitude. One way to keep the memory required by such methods
under control is to use time intervals with fixed length.  Another,
more general, remedy to the extensive use of computer memory in
space-time simulations is to employ accurate dimension reduction
algorithms, both in space and in time, which can lead to coarser
problems with fewer degrees of freedom, also known as upscaled
discretizations. Indeed, an accurate coarser problem can replace the expensive, 
in terms of memory, fine-grid one and still provide a reliable
discretization tool.  For a general dimension reduction approach by
coarsening (in three space dimensions), we refer to
\cite{LashukVassilevski2014}.  The extension of the technique proposed
in~\cite{LashukVassilevski2014} to 4D space-time elements is a work
in progress.  Another feasible approach for dimension reduction in
space-time discretizations is to exploit sparse grids, as proposed
in~\cite{GOV_sparse_grid_space-time}.  More recently, discrete space-time
schemes using B-splines and Non-Uniform Rational Basis Splines (NURBS)
have been employed (see~\cite{UL:LangerMooreNeumueller:2015a}) to
yield stable isogeometric analysis methods for the numerical solution
of parabolic PDEs in fixed and moving spatial
domains. 

We point out that in the present note we do not consider dimension
reduction techniques. Rather, as a first step, we study the accuracy
and stability of the proposed embedding. More specifically, for the
discretization of the space-time formulation of a parabolic problem we
exploit two well-known techniques for convection diffusion equations:
the streamline diffusion method~\cite{1982BrooksA_HughesT-ab} (see
also \cite{1987JohnsonC-ab},
\cite{1989AxelssonO_EijkhoutV_PolmanB_VassilevskiP-aa}) and the
EAFE--Edge Average Finite Element scheme~\cite{1999XuJ_ZikatanovL-aa}
(see also~\cite{1998BankR_CoughranW_CowsarL-aa}
and~\cite{2005LazarovR_ZikatanovL-aa}).  Let us add that the high
order EAFE method developed here provides a novel, high order,
exponentially fitted discretization for convection-diffusion
problems with suitable stability and approximation properties. 

The structure of the remainder of this note is as follows. In
Section~\ref{section:intro}, we introduce the space-time formulation
of parabolic problems.  Then, in Section~\ref{section:SD}, we present
the streamline diffusion method in our space-time setting.
Section~\ref{section-div}, contains the derivation of the high order
EAFE scheme on simplicial finite element grids in arbitrary spatial
dimension. The application details for the lowest order EAFE
discretization to parabolic problems is given in
Section~\ref{section-app-parabolic}.  Finally, in
Section~\ref{section:numerical-tests}, we present numerical tests
showing the optimality and efficiency of both schemes for space-time
formulation of parabolic problems. We conclude this paragraph with
remark on the terminology: as the EAFE scheme may be viewed as a
multidimensional Scharfetter-Gummel
discretization~\cite{1969ScharfetterD_GummelH-aa}, in what follows, we
use the terms ``EAFE discretization'' and ``Scharfetter-Gummel
discretization'' interchangeably.

\section{Space time formulation of parabolic problems}\label{section:intro}

We consider the following parabolic problem:
\begin{equation}\label{eq-model}
\begin{array}{l}
 u_t - \operatorname{div}(K(x)\nabla u - \bm{\beta}\cdot u) + \gamma u= f,\quad x \in \Omega_s,\\
 u=0, \quad x\in \Gamma=\partial\Omega_s;\quad u(x,0) = u_0(x), \quad x\in \Omega_s.
\end{array}
\end{equation}
Here, $\bm{\beta}$ is a vector field (a velocity) and $K(x)$ is, in
general, a scalar (or $d\times d$ tensor valued) function.  Let
$\Omega_t=(0,t_{\max{}})$ be the time interval of interest. The
space-time domain is $\Omega=\Omega_s\times\Omega_t$.  For convenience
we have assumed homogeneous Dirichlet boundary conditions $u=0$ on
$\partial\Omega_s\times \Omega_t$.  In treating time as a space-like
variable, 
the
initial condition at $t=0$ becomes a Dirichlet boundary
condition for the $(d+1)$ dimensional problem.  

In a space-time formulation, introducing a new variable $y=(x,t)$ then
gives the following convection diffusion equation: Find $u=u(y)$ such that
\begin{eqnarray*}
&& -\operatorname{div}_y(D\nabla_y u + \bb\cdot\nabla_y u) +\gamma u
  = f \mbox{ in } ~\Omega
\quad \bb=(\bm{\beta}^t,1)^t: \Omega \mapsto \mathbb{R}^{d+1},\\
&& u = 0\mbox{ on }~\Gamma=\partial\Omega\times\Omega_t;\quad
 u = u_0\mbox{ on }~\Gamma_0=\Omega_s\times\{t=0\}.
\end{eqnarray*}
Without loss of generality we may assume that $u_0=0$ and we define
${\cal H}^1_E(\Omega)$ as the subspace of ${\cal H}^1(\Omega)$
satisfying these homogeneous Dirichlet boundary conditions.

In the following we consider two schemes for discretization of
convection diffusion problems and apply them to space-time
formulations of~\eqref{eq-model}. These are the Streamline Diffusion and
the Scharfetter-Gummel (EAFE) discretizations.  For the latter we need a
non-singular $D$, while above $D = \left [
\begin{array}{cc}
K & 0 \\
0 & 0
\end{array} \right ]$ is actually degenerate. To remedy this, we 
perturb it to make it invertible, i.e., we  let $D = \left [
\begin{array}{cc}
K & 0 \\
0 & \epsilon
\end{array} \right ]$ for a small parameter $\varepsilon>0$. 
\section{Streamline Diffusion}\label{section:SD}
We first consider a simple case when $K=\alpha I$, $\alpha>0$,
$\gamma\ge 0$, and $\bm{\beta}$ are constant. Then 
equation~\eqref{eq-model} has the form:
\begin{equation}\label{strongform}
Lu\equiv u_t  -\alpha\Delta u +  \bm{\beta}\cdot\nabla u +\gamma u =f
\end{equation}
The results below generalize to the variable coefficient case in a
straightforward and well-studied fashion. Here we consider the
constant coefficients case only in an attempt to keep the focus on the important
aspect of time discretization. In allowing for different sizes of
$\alpha$, $\bm{\beta}$ and $\gamma$, our analysis covers several
scenarios of interest. For simplicity we assume the initial condition
$u_0=0$.

The weak form of \eqref{strongform}  is given by: find $u\in{\cal H}^1_E$
such that
\[
(u_t,v)+\alpha(\nabla u,\nabla v)+(\bm{\beta}\cdot\nabla u,v)+ \gamma(u,v)=f(v)
\]
for all $v\in{\cal H}^1_E$. The space-time bilinear for $B(u,v)$ is given by
\[
B(u,v)=\int_0^T
(u_t,v)+\alpha(\nabla u,\nabla v)+(\bm{\beta}\cdot\nabla u,v)+ \gamma(u,v)\,dt
\]
where
\[
(u,v)=\int_{\Omega_s} uv\,dx
\]
is the usual ${\cal L}_2$ inner product on $\Omega_s$. 
The right hand side is given by the linear functional
\[
F(v)=\int_0^T f(v)\,dt.
\]

We assume that the space-time domain
$\Omega$ is covered by a shape regular quasiuniform tessellation ${\cal T}_h$
of elements of size $h$.
The energy norm for this problem is given by
\[
\enorm{u}^2=\norm{u(T)}^2+\int_0^T \alpha\norm{\nabla u}^2
+ h^p  \nu \norm{\bm{\beta}\cdot\nabla u +u_t}^2+\gamma\norm{u}^2\, dt
\]
where
\[
\nu=\frac{1}{\sqrt{|\bm{\beta}|^2+1}}.
\]
For technical reasons made clear below, we set $p=1$ for the important case
of continuous piecewise linear approximation, or the special case 
$\alpha=0$; otherwise we choose $p=2$.

We make a standard Petrov-Galerkin streamline diffusion discretization
for this $d+1$ dimensional problem. 
Let $V_h\subset{\cal H}^1_E$ 
denote a $C^0$ conforming piecewise polynomial
finite element space. The space $V_h$ itself is
the trial space.
In our Petrov-Galerkin  formulation, the test functions are given by
$v+\theta h^p\nu (\bm{\beta}\cdot\nabla v+v_t)$ for $v\in V_h$, 
where $\theta$ is a parameter to be  characterized below.
The discrete problem is: find $u_h\in V_h$ such that
\[
B_h(u_h,v)\equiv B(u_h,v)+\int_0^T(Lu_h,\theta h^p\nu (\bm{\beta}\cdot\nabla v+v_t))\,dt
        =F(v+\theta h^p\nu (\bm{\beta}\cdot\nabla v+v_t))
\]
for all $v\in V_h$. Because $V_h$ is only $C^0$, the term
$(Lu_h,\theta h\nu (\bm{\beta}\cdot\nabla v+v_t))$ is formally interpreted 
elementwise due to possible discontinuities on inter-element boundaries.

We begin with a basic stability result.
\begin{lemma}\label{stabilityresult}
Let $V_h$ be the space of continuous piecewise linear polynomials
or $\alpha=0$.
For $v\in V_h$, and $\theta$ sufficiently small, there exists
$C>0$, independent of $h$, such that
\begin{equation}\label{eqn11}
B_h(v,v) \ge C\enorm{v}^2.
\end{equation}
\end{lemma}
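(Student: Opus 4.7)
The plan is to compute $B_h(v,v)$ directly and extract each term of $\enorm{v}^2$. First I would split $B_h(v,v) = B(v,v) + S(v,v)$, where $S(v,v) = \int_0^T (Lv,\theta h^p \nu (\bm{\beta}\cdot\nabla v + v_t))\,dt$ is the stabilization contribution, and handle $B(v,v)$ by integration by parts in the two antisymmetric terms. Since $v(0)=0$ from the initial condition, $\int_0^T (v_t,v)\,dt = \tfrac{1}{2}\norm{v(T)}^2$; since $\bm{\beta}$ is constant and $v$ vanishes on $\partial\Omega_s$, $(\bm{\beta}\cdot\nabla v,v) = \tfrac{1}{2}\int_{\Omega_s} \bm{\beta}\cdot\nabla(v^2)\,dx = 0$. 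This already yields $B(v,v) = \tfrac{1}{2}\norm{v(T)}^2 + \int_0^T \alpha\norm{\nabla v}^2 + \gamma\norm{v}^2\,dt$, which accounts for three of the four terms of the energy norm.

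Next I would expand $Lv = (\bm{\beta}\cdot\nabla v + v_t) - \alpha\Delta v + \gamma v$ inside $S(v,v)$. The first piece produces exactly the desired streamline term $\int_0^T \theta h^p\nu \norm{\bm{\beta}\cdot\nabla v + v_t}^2\,dt$. The key structural observation is that the troublesome term $-\alpha(\Delta v,\bm{\beta}\cdot\nabla v + v_t)$ vanishes in both cases covered by the hypothesis: if $\alpha=0$ it is trivially zero, and if $V_h$ consists of continuous piecewise linear polynomials then $\Delta v \equiv 0$ on each element, so interpreting the integral elementwise (as stated after the definition of $B_h$) eliminates it. This is exactly why the lemma restricts to these two cases and why $p=1$ suffices.

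The remaining cross term $\theta h^p \nu \gamma(v,\bm{\beta}\cdot\nabla v + v_t)$ I would bound with a weighted Cauchy--Schwarz inequality, splitting it between the $\gamma\norm{v}^2$ mass contribution already available from $B(v,v)$ and the new streamline term from $S(v,v)$. Choosing the weight so that each half of the product is absorbed, and then requiring $\theta$ small enough (in a manner independent of $h$, using that $h^p\nu$ is bounded), lets me keep positive constants in front of every term of $\enorm{v}^2$.

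Finally I would collect the retained fractions of $\norm{v(T)}^2$, $\alpha\norm{\nabla v}^2$, $\gamma\norm{v}^2$, and $h^p\nu\norm{\bm{\beta}\cdot\nabla v + v_t}^2$ to conclude $B_h(v,v) \ge C\enorm{v}^2$ with $C$ depending only on $\theta$ and the problem data but not on $h$. The main (and only real) obstacle is the $\alpha(\Delta v,\cdot)$ term, which is dispatched by the hypothesis itself; absent that hypothesis one would need an elementwise inverse inequality of the form $\norm{\Delta v}_K \lesssim h^{-1}\norm{\nabla v}_K$ on each simplex, which would explain why the general higher-order case requires the stronger scaling $p=2$ rather than $p=1$.
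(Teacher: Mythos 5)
Your proposal is correct and follows essentially the same route as the paper's proof: integrate the skew terms to produce $\tfrac12\norm{v(T)}^2$, observe that the $\alpha(\Delta v,\cdot)$ stabilization term vanishes under the stated hypothesis, keep the exact streamline term, and absorb the $\gamma$ cross term via a weighted Cauchy--Schwarz/Young estimate before taking $\theta$ small. The only difference is presentational (you split $B$ and the stabilization $S$ separately and spell out why the Laplacian term drops, which the paper states without comment), so nothing further is needed.
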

\begin{proof}
We first note the term 
$(\alpha\Delta v,\theta h\nu (v_t+\bm{\beta}\cdot\nabla v)=0$. 

The term
\[
\int_0^T(v_t+\bm{\beta}\cdot\nabla v, v)\,dt  =\frac{\norm{v(T)}^2}{2}
\]
and
\[
(v_t+\bm{\beta}\cdot\nabla v, \theta h\nu (v_t+\bm{\beta}\cdot\nabla v) =
\theta h\nu \norm{v_t+\bm{\beta}\cdot\nabla v}^2.
\]
Finally
\begin{align*}
(\gamma v,v+\theta h\nu (v_t+\bm{\beta}\cdot\nabla v))
&\ge \gamma\norm{v}^2-\gamma\theta h\nu\norm{v}\norm{v_t+\bm{\beta}\cdot\nabla v}
\\
&\ge \gamma\norm{v}^2\left(1-\frac{\gamma\theta h\nu}{2}\right)
-\frac{\theta h\nu}{2}\norm{v_t+\bm{\beta}\cdot\nabla v}^2
\end{align*}
Combining all these estimates, and taking $\theta$ sufficiently small
proves \eqref{eqn11}.
\end{proof}

The orthogonality-like relation for the error $e=u-u_h$ in our approximation
is given by
\begin{equation}\label{eqn12}
B_h(e,v)=0
\end{equation}
for all $v\in V_h$. 

For $\chi\in V_h$,
let 
\begin{gather*}
\phi=u_h-\chi \\ 
\eta=u-\chi. 
\end{gather*}
Our error relation can be expressed in terms of $\phi$ and $\eta$ as
\[
B_h(\phi,v)= B_h(\eta,v)  
\] 
for all $v\in V_h$. We take $v=\phi\in V_h$ and use
Lemma \ref{stabilityresult}. Then we have
\begin{equation}\label{eqn13}
\enorm{\phi}^2\le CB_h(\phi,\phi)\le CB_h(\eta,\phi)
\end{equation}
Let $\delta$ be a sufficiently small parameter to be characterized below.
We now estimate all the terms on the right hand side of
\eqref{eqn13}. First,
\begin{gather*}    
(\eta_t+\bm{\beta}\cdot\nabla \eta,\theta h\nu
 (\phi_t+\bm{\beta}\cdot\nabla \phi)) \le
C\theta h\nu\norm{\eta_t+\bm{\beta}\cdot\nabla \eta}^2
+\delta\theta h\nu\norm{\phi_t+\bm{\beta}\cdot\nabla \phi}^2 
\\
(\gamma \eta,\phi+\theta h\nu(\phi_t+\bm{\beta}\cdot\nabla \phi))
\le C\gamma \norm{\eta}^2+ \delta(\gamma\norm{\phi}^2+\theta h\nu
\norm{\phi_t+\bm{\beta}\cdot\nabla \phi}^2)
\end{gather*}
The third term is a bit more involved.
\begin{align*}
\int_0^T(\eta_t+\bm{\beta}\cdot\nabla \eta,\phi)\,dt &=
(\eta(T),\phi(T))-\int_0^T(\eta,\phi_t+\bm{\beta}\cdot\nabla\phi)\,dt\\
  &\le C\left(\norm{\eta(T)}^2+(h\nu)^{-1}\int_0^T\norm{\eta}^2\,dt \right)\\
      & \hspace{0.5cm}+\delta\left(\norm{\phi(T)}^2+h\nu\int_0^T
                \norm{\phi_t+\bm{\beta}\cdot\nabla\phi}^2\,dt\right)
\end{align*}
Combining these estimates, 
and making $\delta$ sufficiently small, we have
\begin{equation}\label{eqn14}
\enorm{\phi}^2\le C\left(\enorm{\eta}^2
+\int_0^T (h\nu)^{-1} \norm{\eta}^2\,dt\right)
\end{equation}
Using \eqref{eqn14} and the triangle inequality, we obtain
\begin{theorem}\label{streamlinetheorem}
Let $V_h$ be the space of continuous piecewise linear polynomials
or $\alpha=0$.
Then the error $e=u-u_h$ satisfies
\begin{equation}\label{eqn15}
\enorm{u-u_h}^2\le C \inf_{\chi\in V_h}
\left(\enorm{u-\chi}^2+\int_0^T (h\nu)^{-1}
\norm{u-\chi}^2
\,dt  \right)
\end{equation}
\end{theorem}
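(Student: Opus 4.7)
The plan is to combine the coercivity estimate of Lemma \ref{stabilityresult} with the Galerkin orthogonality \eqref{eqn12} in a Strang-type quasi-best-approximation argument. For an arbitrary $\chi\in V_h$, I would write $u-u_h=\eta-\phi$ with $\eta=u-\chi$ and $\phi=u_h-\chi\in V_h$, so that \eqref{eqn12} becomes $B_h(\phi,v)=B_h(\eta,v)$ for all $v\in V_h$. Testing with $v=\phi$ and applying the stability lemma gives $\enorm{\phi}^2\le C B_h(\eta,\phi)$, and the task reduces to bounding each contribution in $B_h(\eta,\phi)$ either by $C\enorm{\eta}^2$, by the extra weighted penalty $\int_0^T(h\nu)^{-1}\norm{\eta}^2\,dt$, or by a small multiple $\delta\enorm{\phi}^2$ absorbable on the left.

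For most contributions this is routine: the diffusion pairing $\alpha(\nabla\eta,\nabla\phi)$, the reaction pairing $\gamma(\eta,\phi+\theta h^p\nu(\phi_t+\bm{\beta}\cdot\nabla\phi))$, and the stabilization pairing $(\eta_t+\bm{\beta}\cdot\nabla\eta,\theta h^p\nu(\phi_t+\bm{\beta}\cdot\nabla\phi))$ each split cleanly via Cauchy--Schwarz and Young's inequality, since $\enorm{\eta}$ directly controls $\sqrt{\alpha}\,\norm{\nabla\eta}$, $\sqrt{\gamma}\,\norm{\eta}$, and $\sqrt{h^p\nu}\,\norm{\eta_t+\bm{\beta}\cdot\nabla\eta}$.

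The main obstacle is the unpaired transport--time term $\int_0^T(\eta_t+\bm{\beta}\cdot\nabla\eta,\phi)\,dt$, because the energy norm controls $\eta_t+\bm{\beta}\cdot\nabla\eta$ only after multiplication by $\sqrt{h^p\nu}$, which is not enough to pair against $\phi$ directly. I would integrate by parts in both $t$ and $x$, using that $\phi(0)=0$ (since $u_0=0$), $\phi$ vanishes on $\partial\Omega_s\times\Omega_t$, and $\bm{\beta}$ is constant, to move derivatives onto $\phi$:
\[
\int_0^T(\eta_t+\bm{\beta}\cdot\nabla\eta,\phi)\,dt=(\eta(T),\phi(T))-\int_0^T(\eta,\phi_t+\bm{\beta}\cdot\nabla\phi)\,dt.
\]
Then Cauchy--Schwarz bounds the endpoint by $C\norm{\eta(T)}^2+\delta\norm{\phi(T)}^2$ and the remaining integral by $C(h\nu)^{-1}\int_0^T\norm{\eta}^2\,dt+\delta h\nu\int_0^T\norm{\phi_t+\bm{\beta}\cdot\nabla\phi}^2\,dt$, with the scaling $\nu=1/\sqrt{|\bm{\beta}|^2+1}$ chosen precisely so that the inverse weight $(h\nu)^{-1}$ balances the streamline piece of $\enorm{\phi}$. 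This is exactly where the non-standard term $\int_0^T(h\nu)^{-1}\norm{\eta}^2\,dt$ in \eqref{eqn15} is born.

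Combining these pieces and choosing $\delta$ small enough to absorb all $\phi$-pieces yields the intermediate estimate \eqref{eqn14}. Finally, the triangle inequality $\enorm{u-u_h}\le\enorm{\eta}+\enorm{\phi}$, together with \eqref{eqn14} and the arbitrariness of $\chi\in V_h$, gives \eqref{eqn15} after taking the infimum on the right-hand side. The restriction to piecewise linear $V_h$ or $\alpha=0$ enters only through Lemma \ref{stabilityresult}, which kills the $(\alpha\Delta v,\theta h\nu(v_t+\bm{\beta}\cdot\nabla v))$ term in the stability estimate.
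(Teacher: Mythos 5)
Your proposal is correct and follows essentially the same route as the paper: the decomposition $\eta=u-\chi$, $\phi=u_h-\chi$, testing with $v=\phi$ and invoking Lemma \ref{stabilityresult}, the integration by parts of $\int_0^T(\eta_t+\bm{\beta}\cdot\nabla\eta,\phi)\,dt$ producing the $(h\nu)^{-1}$-weighted term, absorption via a small $\delta$, and the final triangle inequality all match the paper's argument.
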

Suppose $\alpha=O(1)$ and $V_h$ contains piecewise linear  polynomials.
 Then if $u\in {\cal H}^2(\Omega)$, \eqref{eqn15}
yields an $O(h^{1/2})$ rate of convergence of the space-time
gradient in the streamline direction $\bb=(\bm{\beta}^t ,1)^t$, and 
an optimal $O(h)$ convergence rate for  
$(\int_0^T\norm{\nabla(u-u_h)}^2)^{1/2}$.
If $\gamma=O(1)$ we have $O(h)$ convergence for 
the space-time ${\cal L}_2$ norm.
While not optimal in every norm considered,
overall this is in alignment with well-known
behavior for the classical streamline diffusion method. If
$\alpha=0$ and $V_h$ contains piecewise polynomials of degree $r$,
we lose control of the  gradient $\norm{\nabla(u-u_h)}$
 but gain improved $O(h^r)$ convergence for the space-time gradient in the 
streamline direction, and
if $\gamma=O(1)$ we have improved   $O(h^{r+1/2})$ convergence 
for the ${\cal L}_2$ norm. These again correspond with classical results 
for the streamline diffusion method.

If $\alpha\ne 0$ and $V_h$ contains piecewise polynomials of degree $r>1$,
terms similar to $\alpha(\Delta v,v_t)$ become problematic 
since $\Delta v$ contains no time derivatives and at present
forces us to choose $p=2$. 
(One might alternatively consider replacing the diffusion term
$\Delta u$ with $\Delta u+ \epsilon u_{tt}$, and then analyzing as in the
standard streamline diffusion scenario, but this dilutes the
advantage one obtains through the use of higher order approximation since
we lose consistency with the original PDE).
Here is the analog of Lemma \ref{stabilityresult}.
\begin{lemma}\label{stabilityresult0}
Let $V_h$ be the space of continuous piecewise linear polynomials of degree
$r>1$ and $\alpha\ne 0$.
For $v\in V_h$, and $\theta$ sufficiently small, there exists
$C>0$, independent of $h$, such that
\begin{equation}
B_h(v,v) \ge C\enorm{v}^2.
\end{equation}
\end{lemma}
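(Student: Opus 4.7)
The plan is to mimic the proof of Lemma~\ref{stabilityresult} verbatim for every term except the one that previously vanished, namely $(\alpha \Delta v,\,\theta h^p\nu(v_t+\bm{\beta}\cdot\nabla v))$. In the new setting $\Delta v$ is piecewise defined and nonzero, so I would first interpret this term elementwise: write it as $\sum_{K\in{\cal T}_h}(\alpha \Delta v,\theta h^2\nu(v_t+\bm{\beta}\cdot\nabla v))_K$. All the other contributions (the convective cancellation $\int_0^T(v_t+\bm{\beta}\cdot\nabla v,v)\,dt=\tfrac12\norm{v(T)}^2$, the positivity $\theta h^2\nu\norm{v_t+\bm{\beta}\cdot\nabla v}^2$, and the $\gamma$--term bound) are inherited without change, because they do not use any elementwise smoothness of $v$.

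The main obstacle is controlling the elementwise Laplacian term. My idea is to use a local inverse inequality on each simplex $K$,
\[
\norm{\Delta v}_K \le C h^{-1}\norm{\nabla v}_K,
\]
which is available for $v\in V_h$ piecewise polynomial, to obtain, elementwise,
\[
(\alpha\Delta v,\theta h^2\nu(v_t+\bm{\beta}\cdot\nabla v))_K
\le C\alpha\,\theta h\nu\,\norm{\nabla v}_K\norm{v_t+\bm{\beta}\cdot\nabla v}_K .
\]
This is exactly where the choice $p=2$ is forced: the extra power of $h$ in the stabilization cancels the $h^{-1}$ coming from the inverse inequality, leaving a clean $h\nu$ factor. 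A Cauchy--Schwarz/Young splitting with weight $\alpha/2$ then yields
\[
\le \frac{\alpha}{2}\norm{\nabla v}_K^2
+\frac{C^2\alpha\,\theta^2 h^2\nu^2}{2}\norm{v_t+\bm{\beta}\cdot\nabla v}_K^2.
\]

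Summing over $K$ and integrating in time, the first piece is absorbed by the $\alpha\norm{\nabla v}^2$ contribution in $\enorm{v}^2$, while the second equals $\tfrac12 C^2\alpha\theta\nu\cdot\theta h^2\nu\norm{v_t+\bm\beta\cdot\nabla v}^2$, which (since $\nu\le1$ and $\alpha$ is bounded) is absorbed by the stabilization term $\theta h^2\nu\norm{v_t+\bm{\beta}\cdot\nabla v}^2$ once $\theta$ is chosen sufficiently small. Combining with the bounds already obtained in Lemma~\ref{stabilityresult} for the convective and zero-order contributions, and shrinking $\theta$ further if necessary to dominate the $\gamma$--coupling, we arrive at $B_h(v,v)\ge C\enorm{v}^2$, which is the claim. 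The whole argument hinges on the compatibility of the choice $p=2$ with the polynomial inverse estimate on $V_h$; any smaller $p$ would leave an uncontrolled $h^{p-1}$ factor, explaining the remark preceding the lemma about the need to increase $p$ in the higher-order case.
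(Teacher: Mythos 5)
Your proposal is correct and follows essentially the same route as the paper: the elementwise inverse inequality $\norm{\Delta v}_\tau\le Ch^{-1}\norm{\nabla v}_\tau$ applied to the new term, followed by a Young splitting whose two pieces are absorbed into the $\alpha\norm{\nabla v}^2$ and $\theta h^2\nu\norm{v_t+\bm{\beta}\cdot\nabla v}^2$ contributions for $\theta$ sufficiently small, with the remaining terms treated as in Lemma~\ref{stabilityresult} with $h$ replaced by $h^2$. Your observation that $p=2$ is exactly what cancels the $h^{-1}$ from the inverse estimate matches the paper's rationale.
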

\begin{proof} Generally the proof follows the same pattern as
Lemma \ref{stabilityresult}. The new term is
$(\Delta v,\theta h^2\nu (v_t+\bm{\beta}\cdot\nabla v)$. 
On a single element $\tau\in{\cal T}_h$
we can use a local inverse assumption 
\[
|(-\Delta v,v_t+\bm{\beta}\cdot\nabla v)_{\tau}
        \le Ch^{-1}\norm{\nabla v}_{\tau}\norm{v_t+\bm{\beta}\cdot\nabla v}_{\tau}|
\]
Using this estimate, we have
\begin{multline*}
\int_0^T \alpha (\nabla v,\nabla v)
-(\alpha\Delta v,\theta h^2\nu (v_t+\bm{\beta}\cdot\nabla v))\,dt \\
\ge \int_0^T   \alpha (1-C\alpha\nu \theta) \norm{\nabla v}^2\,dt 
-\frac{\theta h^2\nu}{4}\norm{v_t+\bm{\beta}\cdot\nabla v}^2.
\end{multline*}
The remaining estimates in the proof of Lemma \ref{stabilityresult} are the
same with $h$ replaced by $h^2$.
\end{proof}

We analyze the error similar to proof of Theorem \ref{streamlinetheorem}.
The new terms are
\[
\alpha (\nabla \eta,\nabla\phi) \le
C\alpha \norm{\nabla \eta}^2+\delta\alpha \norm{\nabla \phi}^2
\]
and
\[
\alpha (\Delta \eta,\theta h^2\nu (\phi_t+\bm{\beta}\cdot\nabla \phi))\le
C\alpha^2\nu h^2\norm{\Delta \eta}^2+\delta h^2\nu
\norm{\phi_t+\bm{\beta}\cdot\nabla \phi}^2.
\]
The remaining terms are estimated as in
Theorem \ref{streamlinetheorem} with $h$ replaced by $h^2$, leading to
\begin{theorem} 
Let $V_h$ be the space of continuous piecewise linear polynomials of degree
$r>1$ and $\alpha\ne 0$.
Then the error $e=u-u_h$ satisfies
\begin{equation}\label{eqn15a}
\enorm{u-u_h}^2\le C \inf_{\chi\in V_h}
\left(\enorm{u-\chi}^2+\int_0^T (h^2\nu)^{-1}
\norm{u-\chi}^2
+\alpha h^2\norm{\Delta(u-\chi)}^2\,dt  \right)
\end{equation}
\end{theorem}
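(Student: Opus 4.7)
The plan is to follow the template of Theorem \ref{streamlinetheorem} essentially verbatim, but with $h$ replaced by $h^2$ in the SD weight (since now $p=2$) and with two extra terms coming from the genuine second-order diffusion. I would begin by recording Galerkin orthogonality $B_h(e,v)=0$ for all $v\in V_h$, introduce the splitting $e=\eta-\phi$ with $\phi=u_h-\chi\in V_h$ and $\eta=u-\chi$, and apply Lemma \ref{stabilityresult0} to write
\[
\enorm{\phi}^2 \le C\, B_h(\phi,\phi) = C\, B_h(\eta,\phi).
\]
Then I would bound $B_h(\eta,\phi)$ term by term by Cauchy--Schwarz plus Young's inequality with a small parameter $\delta$, so that the $\phi$-contributions are absorbed into $\enorm{\phi}^2$ on the left.

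For the convective and reactive pieces the estimates in the proof of Theorem \ref{streamlinetheorem} apply unchanged, with the SD weight $h\nu$ replaced everywhere by $h^2\nu$; this already generates the term $\int_0^T(h^2\nu)^{-1}\norm{\eta}^2\,dt$ in the bound. The genuinely new contributions are the two diffusion terms indicated just above the theorem: $\alpha(\nabla\eta,\nabla\phi)$, which splits into $C\alpha\norm{\nabla\eta}^2+\delta\alpha\norm{\nabla\phi}^2$ and contributes $\alpha\norm{\nabla\eta}^2\le \enorm{\eta}^2$ on the right, and $\alpha(\Delta\eta,\theta h^2\nu(\phi_t+\bm{\beta}\cdot\nabla\phi))$, which, by Cauchy--Schwarz and Young, gives $C\alpha^2\nu h^2\norm{\Delta\eta}^2+\delta h^2\nu\norm{\phi_t+\bm{\beta}\cdot\nabla\phi}^2$. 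The $\phi$-piece is absorbed by choosing $\delta$ small, and the $\eta$-piece is exactly the $\alpha h^2\norm{\Delta(u-\chi)}^2$ contribution appearing in \eqref{eqn15a} (up to a harmless factor of $\alpha\nu\le C$).

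Combining all these bounds yields
\[
\enorm{\phi}^2 \le C\Bigl(\enorm{\eta}^2 + \int_0^T (h^2\nu)^{-1}\norm{\eta}^2 + \alpha h^2\norm{\Delta\eta}^2\,dt\Bigr),
\]
after which the triangle inequality $\enorm{e}\le \enorm{\eta}+\enorm{\phi}$ and taking the infimum over $\chi\in V_h$ give \eqref{eqn15a}. I expect the only delicate step to be the treatment of $\alpha(\Delta\eta,\theta h^2\nu(\phi_t+\bm{\beta}\cdot\nabla\phi))$: unlike in Lemma \ref{stabilityresult0}, where a local inverse inequality converts $\Delta v$ into $h^{-1}\nabla v$ for $v\in V_h$, here $\eta$ is not a finite element function and $\Delta\eta$ must be kept as is, which is precisely why the extra $\alpha h^2\norm{\Delta(u-\chi)}^2$ term appears on the right-hand side of the final estimate. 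Everything else is routine bookkeeping.
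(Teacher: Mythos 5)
Your proposal is correct and follows essentially the same route as the paper: the paper likewise reuses the argument of Theorem~\ref{streamlinetheorem} with the weight $h\nu$ replaced by $h^2\nu$, invokes Lemma~\ref{stabilityresult0} for coercivity, and handles exactly the two new terms $\alpha(\nabla\eta,\nabla\phi)$ and $\alpha(\Delta\eta,\theta h^2\nu(\phi_t+\bm{\beta}\cdot\nabla\phi))$ by Cauchy--Schwarz and Young, keeping $\norm{\Delta\eta}$ intact since $\eta\notin V_h$. Your observation that the resulting $C\alpha^2\nu h^2\norm{\Delta\eta}^2$ term matches the stated $\alpha h^2\norm{\Delta(u-\chi)}^2$ up to a bounded factor $\alpha\nu$ is also consistent with how the paper absorbs constants.
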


If $\alpha=O(1)$, $V_h$ is the space of 
continuous polynomials of degree $r>1$, and
$u$ is sufficiently smooth, we have optimal $O(h^r)$ convergence for
$(\int_0^T \norm{\nabla (u-u_h)}^2)^{1/2}$, but only $O(h^{r-1})$ 
convergence for  the space-time gradient in the streamline direction.
If $\gamma=O(1)$ we also obtain $O(h^r)$ convergence in the space-time
${\cal L}_2$ norm.

\section*{A Practical Remark}

Suppose that the space domain $\Omega_s$ has a generic length scale 
$L$. Since the time units for $\Omega_t=[0,T]$ could be completely
unrelated to the space units, the space-time domain 
$\Omega=\Omega_s\times\Omega_t$ could be
quite anisotropic. It could be very long if $T\gg L$ or very short if $T\ll L$.
Filling such potentially thin domains with a small 
number of shape regular elements
could be problematic from the practical point of view. Therefore it could
be useful to rescale the time variable such that it is has a similar scale to
the space variables. For example, one could change variables as in
\[
\tilde{t}=\frac{Lt}{T}\equiv \kappa t
\]
for $0\le\tilde{t}\le L$.
The modified space time-domain $\Omega_s\times [0,L]$ is more
isotropic, and likely could be tessellated with far fewer shape regular
elements.
In terms of the partial differential equation,
\[
\frac{\partial u}{\partial t}=
\kappa   \frac{\partial u}{\partial \tilde{t}}
\]
making the convection in the time direction larger or smaller depending
on the value of $\kappa$. In terms of our analysis, we could replace
\begin{gather*}
\alpha \rightarrow \frac{\alpha}{\kappa}\equiv\tilde{\alpha} \\
\beta  \rightarrow \frac{\beta }{\kappa}\equiv\tilde{\beta} \\
\gamma \rightarrow \frac{\gamma}{\kappa}\equiv\tilde{\gamma}
\end{gather*}
and directly apply the analysis of the previous section to this
modified constant coefficient equation.
\section{High Order Scharfetter Gummel discretization}\label{section-div}
In this section we derive a high order Scharfetter-Gummel scheme on
simplicial finite element grids in dimension $d\ge 1$. The original
Scharfetter-Gummel difference
scheme~\cite{1969ScharfetterD_GummelH-aa} is a method used in
simulating 1-dimensional semiconductor equations. After its discovery,
it has been generalized and used for the numerical solution of
convection-diffusion equations of the form:
\begin{eqnarray}
&&\label{div-form} -\operatorname{div} J(u) = f, \quad x\in
   \Omega\subset \mathbb{R}^d \\ 
&&\label{flux} J(u) = (D(x)\nabla_x  u - \bb u), \\
&&\label{bca} u(x)=0,\quad x\in \Gamma_D, \quad J(u)\cdot\bn =0,\quad x\in \Gamma_N\\
&& \label{bcb} Du\cdot\bn =0,\quad x\in \Gamma_R.
\end{eqnarray}
Here, $J(u)$ is the flux variable which plays an important role in
approximating the weak form of the equation. We note that the natural
boundary condition is the one given on $\Gamma_N$ and the boundary
condition~\eqref{bcb} is of a Robin type for this problem. 
The weak form of the equation above is: Find $u\in V$ such that 
\begin{eqnarray}
&&\label{weak} a(u,v) + m_{R}(u,v)= f(v), \\
&&\label{weak-div-form} a(u,v) = \int_\Omega J(u)\cdot\nabla v, \quad f(v)
   = \int_{\Omega} fv\\
&&\label{weak-bcb} m_{R}(u,v) = \int_{\Gamma_R}(\bb \cdot\bn )uv
\end{eqnarray}
The variational form is obtained after integration by parts and using
the fact that on $\Gamma_R$,
$Du\cdot\bn =J(u)\cdot\bn  - \bb \cdot\bn  u$.

The Scharfetter-Gummel scheme was extended to more than 1 spatial
dimension as the Edge Average Finite Element (EAFE) Scheme.  A priori
error estimates in any dimension were shown
in~\cite{1999XuJ_ZikatanovL-aa}. This work only considered scalar
valued diffusion coefficients (although in any spatial dimension); a
discretization for matrix valued diffusion coefficients was proposed
and analyzed in~\cite{2005LazarovR_ZikatanovL-aa}.  Related work on
exponential fitting in discretizing convection-diffusion equations via
mixed finite element methods is~\cite{1989BrezziF_MariniL_PietraP-aa}.
More recently, the techniques from~\cite{1999XuJ_ZikatanovL-aa} have
been utilized to yield a second order gauge invariant discretizations
for Pauli and Schr\"{o}dinger equations
(see~\cite{2015Harald-ChristiansenS_HalvorsenT-aa}).

Here, we provide a novel approach which gives a Scharfetter-Gummel
discretization for finite element spaces of order $r\ge 1$.  Our
approach follows the ideas in~\cite{1999XuJ_ZikatanovL-aa} and
\cite{2005LazarovR_ZikatanovL-aa}. The extension to $r\ge 1$, however
is not at all straightforward and requires results from the recently
developed Finite Element Exterior Calculus. The rationale of
constructing the high order Scharfetter-Gummel scheme is:
\begin{itemize}\setlength{\itemsep}{0mm}
\item [(i)] approximate the flux $J(u)$ via the \Nedelec elements
  (discrete differential $1$-forms with polynomial coefficients);
\item [(ii)] eliminate the flux variable and write the resulting
  discrete problem in terms of the scalar valued finite element
  approximation of the solution of~\eqref{eq-model} $u$ (a $0$-form).
\end{itemize}
To set up the finite element approximation, we let us itemize some of the 
ingredients and the main assumptions needed for the discretization.
\begin{itemize}
\item We assume that $\Omega$ is covered by a conforming, simplicial,
  shape-regular mesh $\mathcal{T}_h$. We have 
$\Omega=\cup\{T\;\big|\;T\in\mathcal{T}_h\}$.
\item The space $V_h$ is the space of conforming Lagrange finite
  elements of degree $r$ and  for the
  derivation of the scheme, we also need the 1st-kind-\Nedelec polynomial spaces on a fixed
  element
  (cf.eg. \cite{1980NedelecJ-aa,1986NedelecJ-aa,1988BossavitA-aa,2003MonkP-aa}). The
  details are described below in~\S\ref{section:Nedelec}.
 
\item We assume that the flux $J$ and $u$ are smooth enough so that
  all the norms of functions below make sense. In particular
  $J\in W^{1,p}(T)$, for all $T\in \mathcal{T}_h$ and for some
  $p>d$. The solution $u$ is at least continuous, so that its Lagrange
  interpolant is well defined. 
\item We assume that the coefficients $D$, $\bb$ are piece-wise constants
  with discontinuities aligned with $\mathcal{T}_h$. 
\end{itemize}

\begin{remark}\label{rem1}
The assumption $J\in W^{1,p}(\Omega)$ needs some comments. 
One important feature of the Scharfetter--Gummel scheme is that the estimates
on $\|u_I-u_h\|_{1,\Omega}$ are in terms of norms of the
flux $J(u)$. We thus approximate more accurately the interpolant
$u_I\in V_h$ of the solution if the flux is smooth, while both the
solution and the coefficients can be rough functions. For example, if
we look at the 1D problem on $(0,1)$:
\[
-(u'-\beta u)' = 0, \quad u(0)=0, \quad u(1)=1,
\]
we observe that $J(u)=(u'-\beta u)$ is a constant, i.e. smooth, while
the solution $u$ may exhibit a sharp boundary layer, depending on
$\beta$. In fact, in this idealized situation in 1D, the estimate in 
Theorem~\ref{thm:ez} implies that
$u_I = u_h$, i.e. we have captured the exact solution at the vertices.
\end{remark}

Next, we show that (i) and (ii) in the rationale given earlier 
are computationally feasible steps. 

\subsection{Notation and \Nedelec spaces}\label{section:Nedelec}
Consider the \Nedelec space $P^{\mathcal{N}}$, which restricted to any
element $T$ is the following polynomial space
\begin{equation}\label{nedelec-space}
\P^{\mathcal{N}} = \left (P_{r-1} \right )^d \oplus {\bS}_{r}, \quad  
P_{r-1}^d\subsetneq P^{\mathcal{N}} \subsetneq (P_{r})^d,
\end{equation}
where $P_j$, $j=(r-1), r$ is the space of polynomials of degree $\le j$ on $T$, and 
${\bS}_{r}$ is a subspace of the space $\bH_r$ of vector valued homogeneous polynomials of
degree $r$ defined as
\begin{equation*}
\bS_r = \left\{\bs\in \bH_r \;\;\big|\;\; \bs\cdot \bx = 0\right\}.
\end{equation*}
By definition, the inclusion relations given in
equation~\eqref{nedelec-space} hold on any element
$T\in \mathcal{T}_h$.  From now on we fix this element. We refer
to~\cite{1980NedelecJ-aa}, \cite{1986NedelecJ-aa},
\cite{1999HiptmairR-aa}, \cite{2006ArnoldD_FalkR_WintherR-aa},
\cite{2010ArnoldD_FalkR_WintherR-aa}, for the classical and the modern
description of these spaces and studies of their properties. In what
follows we use some of the tools from~\cite{1999HiptmairR-aa} and
\cite{2006ArnoldD_FalkR_WintherR-aa}.  In our notation, the lowest
order of such polynomials corresponds to $r=1$.

Further, let $M=\dim P^{\mathcal{N}}$ be the dimension of the \Nedelec
polynomial space on $T$.  The elements of the basis in the dual space
of $P^{\mathcal{N}}$ are known as \emph{degrees of freedom} and we
denote them by $\{\eta_j\}_{j=1}^M$. Next, the basis in
$P^{\mathcal{N}}$, dual to the degrees of freedom we denote by
$\{\bm{\varphi}_j\}_{j=1}^M$. For general simplex in $\mathbb{R}^d$,
the explicit form of the degrees of freedom and their dual basis is
found in~\cite{1999HiptmairR-aa}. For our purposes it is sufficient to
note that the functionals $\eta_j$ can be thought as integrals of
traces of functions over sub-simplices.  For the lowest order case,
we have
\[
\langle \eta_e,\bv\rangle = \int_{e} \bv \cdot \bm{\tau}_e, \quad 
\bm{\varphi}_e = \lambda_i\nabla\lambda_j-\lambda_j\nabla \lambda_i. 
\]
for every edge $e=(i,j)$ of $T$ (there are $\frac{d(d+1)}{2}$ edges).
Here, $\bm{\tau}_e$ is the tangent for edge $e$, and $\{ \lambda_i\} $
are the usual barycentric coordinates for element $T$ (cf., e.g.,
\cite{2002CiarletP-aa}).
Using this notation, we have that any function
$\bv\in P^{\mathcal{N}}$ can be written as
\begin{equation}\label{expansion}
\bv = \sum_{j=1}^M \langle \eta_j, \bv\rangle\bm{\varphi}_j(\bx ). 
\end{equation}
We stress that this representation is unique and provides a canonical interpolation operator,
which for sufficiently smooth vector valued $\bv$ is defined as 
\begin{equation}\label{interpolation-n}
\Pi^{\mathcal{N}}\bv = \sum_{j=1}^M \langle \eta_j,
\bv\rangle\bm{\varphi}_j(\bx ). 
\end{equation}
The smoothness of $\bv$ must be such that the linear forms 
$\langle \eta_j,\cdot\rangle$ are bounded.  

Consider now the space $V_h$ of Lagrange finite elements of order $r$.
The standard set of the degrees of freedom in such case are
point evaluations (see \cite[Theorem~2.2.1]{2002CiarletP-aa}) and we denote them by $\{\mu_j\}$. Further, the polynomial basis, dual to these degrees of freedom, we denote by
$\{\xi_j\}$. We then have a canonical interpolation operator, well defined
for any continuous $v$. The image of $v\in C^{0}(\overline\Omega)$
under this interpolation is denoted by $v_I$ and we have 
\begin{equation}\label{interpolation-l}
v_I = \sum_{j=1}^{N_h} \langle \mu_j, v \rangle \xi_j (\bx ). 
\end{equation}
There is no need to distinguish the global interpolation operator (on
$\Omega$) and the local one (on $T\in \mathcal{T}_h$) for our
considerations and we use the same notation for both. Let us note,
however, that when working on fixed $T\in \mathcal{T}_h$ we will use
$N=\dim P_r={r+d \choose d}$, instead of $N_h=\dim V_h$.
 
As is well known~\cite{1999HiptmairR-aa}, we have commutative diagrams
linking the \Nedelec elements and the Lagrange elements of matching
orders (order $r$ here), and on every element $T$ we have
\[
\Pi^{\mathcal{N}} \nabla v =  \nabla v_I. 
\]
This relation is in fact a relation between degrees of freedom, namely
\begin{equation}\label{crucial-a}
\langle \eta_j , \nabla v \rangle =  \langle\eta_j, \nabla v_I\rangle.
\end{equation}
This is obvious by using the definition of 
$\Pi^{\mathcal{N}}$,  the fact that $\nabla v_I\in P^{\mathcal{N}}$,
and the uniqueness of the representation in~\eqref{expansion}.


\subsection{Derivation of a high order Scharfetter-Gummel scheme}\label{derivation-section}
Let us fix $T\in \mathcal{T}_h$ and 
we start with the definition of $J$ and use that $D$ and $\bb$ are
constant on $T$. 
\begin{eqnarray*}
&&J(u) = D\nabla u - \bb u = \exp(\bq\cdot \bx)D\nabla (\exp(-\bq\cdot
   \bx) u),\;\; \bq = D^{-1} \bb.
\end{eqnarray*}
Hence, we have 
\begin{equation}\label{grads}
\exp(-\bq\cdot \bx)D^{-1} J(u) = \nabla (\exp(-\bq\cdot \bx) u). 
\end{equation}
If we apply now $\langle \eta_j,\cdot\rangle$, $j=1:M$ on both sides,
and then use~\eqref{crucial-a} we get
\begin{eqnarray*}
\langle\eta_j, e^{(-\bq\cdot \bx)}D^{-1} J(u)\rangle & = & 
\langle\eta_j, \nabla (e^{(-\bq\cdot \bx)} u) \rangle \\
& = & \langle\eta_j, \nabla \big(e^{(-\bq\cdot \bx)} u\big)_I\rangle ,\\
& = & \langle\eta_j, \nabla \big(e^{(-\bq\cdot \bx)} u_I\big)_I\rangle ,
\end{eqnarray*}
The latter identity on the right hand side above, uses the fact that the $''I''$-interpolant is based on the functionals $\mu_j$ that are based on nodal evaluation.
As expected, the right hand side is a gradient of a function in $V_h$
and in summary we have
\begin{equation}\label{the-identity}
\langle\eta_j, e^{(-\bq\cdot \bx)}D^{-1} J(u)\rangle 
= \langle\eta_j, \nabla \big(e^{(-\bq\cdot \bx)} u_I\big)_I\rangle,\quad j=1,\ldots,M. 
\end{equation}
Introducing now $\bm{G}(J(u))\in \mathbb{R}^M$ and 
$\bd(u)\in \mathbb{R}^M$ by 
\begin{eqnarray}
\label{G-definition}
[\bm{G}(J(u))]_j & = & \langle\eta_j, e^{(-\bq\cdot \bx)}D^{-1} J(u)\rangle, \quad
j=1,\ldots,m\\
  \label{d-definition}
[\bd(u)]_j & = & \langle\eta_j, \nabla \big(e^{(-\bq\cdot \bx)}
u_I\big)_I\rangle,\quad j=1,\ldots,M.
\end{eqnarray} 
and we can write~\eqref{the-identity} as
\begin{equation}\label{the-identity-a}
\bm{G}(J(u)) = \bd(u),
\end{equation}
Note that both $\bm G$ and $\bd$ are linear operators, mapping vector
fields and functions to $\mathbb{R}^M$. We remark that the
relation~\eqref{the-identity-a} is used later in the definition of the
approximate bilinear form, and, in the proof of the error estimates,
and, we further stress on the fact that $\bd(u) = \bd(u_I)$, by definition.
The advantage of the exponential weighting in~\eqref{the-identity} is
that it provides a way to accurately approximate the flux by
polynomials. Note that the right side of~\eqref{the-identity} are the
values of $\eta_j$ evaluated on a polynomial, while the left side
contains the ``true'' flux $J(u)$.  As we shall see later, it is
advantageous to use discretizations schemes based
on~\eqref{the-identity} when the flux is piece-wise smooth. This not
only includes the case when $u$ and the PDE coefficients are
piece-wise smooth, but also includes many other cases (see
Remark~\ref{rem1} for a simple 1D example on this).

The main idea of the Scharfetter-Gummel and EAFE schemes is to 
approximate $J(u)$,
\[
J(u)\approx J_T(u)\in P^{\mathcal{N}},
\] 
or equivalently, we seek
\begin{equation*}
J_T(u) = \sum\limits c_j \bm{\varphi}_j,
\end{equation*}
for some coefficient vector $\bc = (c_j)$. The coefficient $\bc$ is chosen
so that the relation~\eqref{the-identity} still holds for the
approximation.  An important question is whether this is possible.  If
$r=1$, and we use the lowest order \Nedelec elements, this is
definitely the case as shown in the earlier
works~\cite{1999XuJ_ZikatanovL-aa, 2005LazarovR_ZikatanovL-aa}. 

A construction of exponentially-fitted discretizations with higher
order polynomial spaces is a bit more intricate.  In general, we would
like to find $J_T(u) \in P^{\mathcal{N}}$. A key observation is that
in order to derive our scheme, we use the weak form of the
equation~\eqref{div-form} and we will aim to approximate the weak form
as follows:
\begin{equation*}
\int\limits_T J(u_h) \cdot \nabla v_h\approx \int\limits_T J_T(u_h) \cdot \nabla v_h,
\end{equation*}
for functions $v_h \in V_h$ and $J_T(u_h)\approx J(u_h)$. As on $T$,
$\nabla v_h \in \left (P_{r-1} \right )^d$, it is sufficient to look
for approximations $J_T(u)\in (P_{r-1})^d\subset P^{\mathcal{N}}$.


We now explore this observation and look at how it
  affects the identity~\eqref{the-identity-a}.  Let $P$ be the matrix
  representation of the embedding
  $(P_{r-1})^d\subset P^{\mathcal{N}}$.  To define this matrix, let
  $\{\bm{\psi}_m\}_{m=1}^{M_0}$ be a basis in $(P_{r-1})^d$, with
  $M_0=\dim(P_{r-1})^d$ and let $\{\bm{\varphi}_j\}_{j=1}^{M}$ be the
  basis in $P^{\mathcal{N}}$, dual to the degrees of freedom
  $\{\eta_m\}_{m=1}^M$. Then the entries of $P$ are the coefficients
  in the expansion of $\bm\psi_m$ in terms of
  $\{\bm{\varphi}_j\}_{j=1}^M$, and we have,
\begin{equation}\label{prolongation}
\bm\psi_k = \sum_{j=1}^M p_{jk}\bm\varphi_j, \quad \mbox{with}\quad 
p_{mk} = \langle \eta_m,\bm\psi_k\rangle.
\end{equation}
Note that $\nabla \big(\exp(-\bq\cdot \bx) u_I\big)_I$ is an element
$(P_{r-1})^d$, and, as such, it can be written as a linear combination via
$\{\bm{\psi}_k\}_{k=1}^{M_0}$. Recalling the definition of $\bd(u)$
in~\eqref{d-definition} then leads to the following useful relations:
\begin{eqnarray*}
&&\nabla \big(\exp(-\bq\cdot \bx) u_I\big)_I =  \sum_{k=1}^{M_0}
  \widetilde{d}_k \bm\psi_k,\quad \lbrack\bd(u)\rbrack_j =   \sum_{k=1}^{M_0}\langle\eta_j,\bm\psi_k\rangle
                 \widetilde{d}_k,\\
&& \bd(u) = P \widetilde{\bd}.
\end{eqnarray*}
As a consequence, to define the approximation $J_T$, we need to find a
solution of the following problem
\begin{equation}\label{subspace solution}
P^*ZP {\widetilde \bc} = P^* P\widetilde{\bd},
\end{equation}
where we have set $\bc = P {\widetilde \bc}$,  and, as we have shown,  
$\bd=\widetilde{\bd}$. 
Above the matrix $Z\in \mathbb{R}^{M\times M}$ has entries 
\[
Z_{jk}=\langle \eta_j, e^{-\bb\cdot
  D^{-1}\bx}D^{-1}\bm{\varphi}_k\rangle.
\]

The following remark is in order.  In general, we may have tried to
solve the following system of equations for the coefficients $\bc$:
\begin{equation}\label{coeff}
Z\bc = \bd.
\end{equation}
Clearly, if this is a well posed problem, then we can find the
approximation $J_T(u)$. However, as it described above, we only need
the solution in subspace, i.e., to solve problem \eqref{subspace
  solution}.  
To show that the subspace problem~\eqref{subspace solution} is solvable it is
sufficient to show that $ZP$ is injective, and since $P$ is injective,
it is sufficient to show that $Z$ is injective on the range of $P$.

We let
\begin{equation}\label{Z dagger}
Z^{\dagger} = P \left (P^*ZP \right )^{-1} P^*.
\end{equation}
In the following, we will simply denote $\bc=P\widetilde{\bc}$ by
$\bc=Z^{\dagger}\bd$, or, by~\eqref{the-identity-a}, by $Z^{\dagger} G(J(u))$.

The following lemma follows by the construction of the approximation $J_T(u)$.
\begin{lemma}\label{lemma: polynomial recovery}
If $J(u)$ is polynomial of degree $r-1$, then its approximation 
$J_T(u)$ defined by $Z^{\dagger} G(J(u))$ coincides with $J(u)$.
\end{lemma}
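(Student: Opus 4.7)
The plan is to trace the construction of $J_T(u)$ through each step under the hypothesis $J(u)\in (P_{r-1})^d$ and to verify that the linear algebra simply returns $J(u)$. The whole argument is a consistency check: the weighted-flux identity \eqref{the-identity-a} becomes algebraically exact when $J(u)$ is polynomial of degree at most $r-1$, so the projected system \eqref{subspace solution} must recover exactly the N\'ed\'elec coefficients of $J(u)$.

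First I would expand $J(u)$ in the full N\'ed\'elec basis, $J(u)=\sum_{j=1}^M c^*_j\bm{\varphi}_j$. Because $P$ is the matrix of the embedding $(P_{r-1})^d\hookrightarrow P^{\mathcal{N}}$ with respect to the bases $\{\bm{\psi}_k\}$ and $\{\bm{\varphi}_j\}$ (see \eqref{prolongation}), and since $J(u)$ lies in the smaller space, the coefficient vector $\bc^*=(c^*_j)$ must lie in the range of $P$; concretely, if $J(u)=\sum_k\widetilde{c}^*_k\bm{\psi}_k$, then \eqref{prolongation} gives $\bc^*=P\widetilde{\bc}^*$ with $\widetilde{\bc}^*=(\widetilde{c}^*_k)\in\mathbb{R}^{M_0}$.

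Next I would compute $\bm{G}(J(u))$ directly from \eqref{G-definition}. Substituting the expansion of $J(u)$, using linearity of each functional $\eta_j$ and of $D^{-1}$, and recognising the entries of $Z$, one obtains $[\bm{G}(J(u))]_j=\sum_k c^*_k\,Z_{jk}$, so $\bm{G}(J(u))=Z\bc^*=ZP\widetilde{\bc}^*$. Applying $Z^{\dagger}=P(P^*ZP)^{-1}P^*$ from \eqref{Z dagger} and cancelling,
$$Z^{\dagger}\bm{G}(J(u))=P(P^*ZP)^{-1}(P^*ZP)\widetilde{\bc}^*=P\widetilde{\bc}^*=\bc^*,$$
so the scheme returns exactly the N\'ed\'elec coefficients of $J(u)$, i.e.\ $J_T(u)=\sum_j c^*_j\bm{\varphi}_j=J(u)$.

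The only nontrivial ingredient is the invertibility of $P^*ZP$, but this is already in force as the well-posedness assumption for the projected system \eqref{subspace solution} discussed just before \eqref{Z dagger}. Beyond this, the proof is pure linear algebra exploiting that $\bm{G}$ acts on the range of $P$ through $Z$ alone, and I do not anticipate any genuine obstacle.
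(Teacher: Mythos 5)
Your argument is correct and is exactly the unpacking of what the paper means when it says the lemma ``follows by the construction'' of $J_T(u)$: the coefficient vector of a degree-$(r-1)$ flux lies in the range of $P$, $\bm{G}$ acts on such fluxes through $Z$, and $Z^{\dagger}Z P\widetilde{\bc}^{*}=P\widetilde{\bc}^{*}$. One small adjustment: the invertibility of $P^{*}ZP$ is not an assumption but is supplied by the unisolvence result (Lemma~\ref{lemma-unisolvence}), so you should cite that lemma rather than call it a well-posedness hypothesis.
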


\subsubsection{A unisolvence result}
Note that, when $\bb=0$, the solvability of such system follows from
the fact that the \Nedelec degrees of freedom form a unisolvent set of
functionals on $P^{\mathcal{N}}\supset (P_{r-1})^d$.  Multiplying by
the exponent changes the game, and, we need to prove some of the basic
results on unisolvence of \Nedelec degrees of freedom for
quasi-polynomials which we state in the following lemma.
\begin{lemma}\label{lemma-unisolvence} 
  The matrix $ZP$ is injective, or, equivalently, if
  $\bp\in (P_{r-1})^d$ and
  $\langle \eta_j,e^{(-\bb\cdot D^{-1}\bx)}\bp\rangle=0$, for all
  $j=1:M$, then $\bp=0$.
\end{lemma}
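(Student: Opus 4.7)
The plan is to mirror the classical unisolvence argument for first-kind N\'ed\'elec degrees of freedom on $P^{\mathcal{N}}$, working sub-simplex by sub-simplex in order of increasing dimension, but with one twist: on each sub-simplex $\sigma$ of $T$ the ordinary $L^2$ pairing is replaced by the weighted pairing
\[
\langle f,g\rangle_{w,\sigma} := \int_\sigma e^{-\bq\cdot\bx}\, f\,g\,d\sigma,
\qquad \bq := D^{-1}\bb.
\]
Because $e^{-\bq\cdot\bx}$ is smooth and strictly positive on the compact simplex $T$, this pairing is a positive-definite inner product on any finite-dimensional space of continuous functions on $\sigma$, and that is the only analytic ingredient I need beyond the algebraic structure of the N\'ed\'elec dofs themselves.

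First I would process the edges ($k=1$). For each edge $e=(i,j)$ of $T$ and each $q\in P_{r-1}(e)$, the hypothesis yields
\[
\int_e e^{-\bq\cdot\bx}\,(\bp\cdot\btau_e)\,q\,ds = 0 .
\]
Since $f := (\bp\cdot\btau_e)|_e$ itself belongs to $P_{r-1}(e)$, testing with $q=f$ gives $\langle f,f\rangle_{w,e}=0$, whence $\bp\cdot\btau_e\equiv 0$ on every edge of $T$.

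Next I would run an induction on the dimension $k=2,\dots,d$ of the sub-simplex $\sigma$. Assuming that on every sub-simplex of dimension $<k$ the traces of $\bp$ controlled by the lower-dimensional dofs have already been killed, the still-unconstrained part of $\bp|_\sigma$ is a polynomial of degree $\le r-1$ on $\sigma$ sitting exactly in the test space that is dual to the N\'ed\'elec dofs attached to $\sigma$; this matching is the heart of the classical unisolvence proof for $P^{\mathcal{N}}$ (cf.~\cite{1999HiptmairR-aa}), and it is unchanged here because multiplication by the smooth positive weight alters neither polynomial degrees nor the duality structure. Pairing this piece with itself inside $\langle\cdot,\cdot\rangle_{w,\sigma}$ then forces it to vanish on $\sigma$. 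Taking $k=d$ (the interior dofs on $T$) eliminates any residue and yields $\bp\equiv 0$.

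The main obstacle I expect is the bookkeeping in the inductive step: identifying, at each $k$, precisely which component of $\bp|_\sigma$ survives the lower-dimensional dofs, and verifying that it lies in the test space associated with $\sigma$. This is the same matching that underlies the classical N\'ed\'elec unisolvence calculation; the exponential weight affects only the pairing used at each step, never the algebraic identification.
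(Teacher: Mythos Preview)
Your proposal is correct and follows essentially the same route as the paper's appendix proof: induction over sub-simplices of increasing dimension, with the exponential weight entering only as a strictly positive factor so that the definiteness argument at each step goes through unchanged. The bookkeeping you flag as the main obstacle is exactly what the paper makes precise via a bubble lemma---on a $k$-subsimplex the survivor is not literally in the test space but factors as $\prod_{m\neq j}\lambda_m$ times $\tilde u_j\in P_{r-k}$, and one tests with $p_j=\tilde u_j$ rather than with the full survivor---so the overall strategy is identical.
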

\begin{proof}
  This proof follows exactly the lines of the proofs of
  \cite[Lemma~4.5, Lemma~4.6]{2006ArnoldD_FalkR_WintherR-aa}. The only
  modifications involve the weighting with exponential
  function, which is positive everywhere. The rest of the arguments
  carry over without changes.  A proof in terms of vector proxies
  is given in~\ref{appendix}.
\end{proof}

\subsubsection{Derivation of the discrete problem}
Since now the approximation $J_T(u_h)$, for $u_h\in V_h$ is well
defined, due Lemma~\ref{lemma-unisolvence}, we have a natural
approximating bilinear form. For $u_h\in V_h$ and $v_h\in V_h$ we set
\renewcommand{\arraystretch}{1.5}
\begin{equation}\label{bilinear-approx}
\begin{array}{rcl}
\displaystyle a_h(u_h,v_h) & = & \displaystyle \sum_{T} \int_T J_T(u_h) \cdot\nabla v_h\\
& = & \displaystyle \sum_T \sum_{j=1}^M \int_T[Z_T^{\dagger} d_T(u_h)]_j\int_T
  \bm{\varphi}_j\cdot\nabla v_h. 
\end{array}
\end{equation}
\renewcommand{\arraystretch}{1}
 The coefficients in $J_T(u_h)$ are determined by $Z_T^{\dagger}\bd_T(u_h)$,
for all $T\in \mathcal{T}_h$, which in turn indeed makes the right
side of~\eqref{bilinear-approx} to depend only on the degrees of
freedom of $u_h$.

We then define the following discrete problem: Find $u_h\in V_h$ such that 
\begin{equation}\label{discrete-problem}
a_h(u_h,v) = f(v), \quad \mbox{for all }\quad v\in V_h.
\end{equation}

We note another useful relation which follows from the derivation
above and is used in the error estimates below. It is an analogue of
\cite[Equation~(3.16)]{1999XuJ_ZikatanovL-aa}, and
\cite[Equation~(3.8)]{2005LazarovR_ZikatanovL-aa} and it plays a
crucial role in the a priori error estimates.  In particular it is
useful to estimate the deviation of the derived discrete scheme from
the standard Galerkin one (with bilinear form
$a(u,v)=\int\limits_\Omega \left (D \nabla u - \bb u\right) \cdot
\nabla v$).

\begin{lemma}\label{identity-b} For any continuous $u$, and
  sufficiently smooth $J$, such that $\Pi^{\mathcal{N}}J(u)$ is well defined we have:
\begin{equation}\label{id-b}
a_h(u_I,v_h) = \sum_T \sum_{j=1}^M [Z_T^{\dagger} G_T(J(u))]_j\bm{\varphi}_j\cdot\nabla v_h. 
\end{equation}
\end{lemma}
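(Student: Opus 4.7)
The plan is to chase definitions and then invoke the pointwise identity $e^{-\bq\cdot\bx}D^{-1}J(u)=\nabla(e^{-\bq\cdot\bx}u)$ established before equation~\eqref{the-identity-a}. More precisely, I would start from the explicit form of $a_h$ on a fixed element $T$,
\[
a_h(u_I,v_h)\big|_T \;=\; \sum_{j=1}^{M}\bigl[Z_T^{\dagger}\bd_T(u_I)\bigr]_j\,\int_T \bm{\varphi}_j\cdot\nabla v_h,
\]
and rewrite $\bd_T(u_I)$ as $\bG_T(J(u))$. Summing over $T\in\mathcal{T}_h$ then yields the claimed identity.

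The key structural observation is that the Lagrange interpolant is nodal: the functionals $\mu_j$ defining it are point evaluations, so for any continuous $w$ we have $(w)_I=(w_I)_I$ whenever these are applied via $\eta_j$ to a polynomial interpolant, which is exactly the step used in the chain leading to~\eqref{the-identity-a}. Applying this to $w=e^{-\bq\cdot\bx}u$ gives
\[
\bigl[\bd_T(u_I)\bigr]_j
= \langle\eta_j,\nabla(e^{-\bq\cdot\bx}u_I)_I\rangle
= \langle\eta_j,\nabla(e^{-\bq\cdot\bx}u)_I\rangle
= \langle\eta_j,\nabla(e^{-\bq\cdot\bx}u)\rangle,
\]
where the last equality uses the commuting diagram property~\eqref{crucial-a}, $\Pi^{\mathcal{N}}\nabla v=\nabla v_I$, together with uniqueness of the expansion~\eqref{expansion}. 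Now I invoke the pointwise identity~\eqref{grads}, valid on each $T$ because $D$ and $\bb$ are element-wise constant, to replace $\nabla(e^{-\bq\cdot\bx}u)$ by $e^{-\bq\cdot\bx}D^{-1}J(u)$. Under the smoothness hypothesis that $\Pi^{\mathcal{N}}J(u)$ is defined, the functional $\langle\eta_j,\cdot\rangle$ acts on $e^{-\bq\cdot\bx}D^{-1}J(u)$ and yields exactly $[\bG_T(J(u))]_j$ by definition~\eqref{G-definition}. Hence $\bd_T(u_I)=\bG_T(J(u))$ on each $T$, and the lemma follows by substitution into the formula for $a_h$.

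The main subtlety I expect is bookkeeping rather than depth: one must verify that each interpolation and each degree-of-freedom evaluation is legal under the stated regularity ($u$ continuous so $u_I$ exists, $J(u)$ smooth enough for $\Pi^{\mathcal{N}}J(u)$ hence for $\langle\eta_j,J(u)\rangle$), and that $\bq=D^{-1}\bb$ is well defined piecewise. The nontrivial algebraic step has already been proved upstream as~\eqref{the-identity-a}, so this lemma is essentially a transcription of that identity through the definition of $a_h$; no new estimate or new unisolvence argument is required beyond Lemma~\ref{lemma-unisolvence}, which is only needed to give meaning to $Z_T^{\dagger}$ and is invoked implicitly.
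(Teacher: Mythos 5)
Your proposal is correct and follows essentially the same route as the paper: the paper's proof is precisely the observation that $\bd_T(u_I)=\bd_T(u)$ (since the Lagrange interpolant is nodal) together with the identity~\eqref{the-identity-a}, substituted into the definition~\eqref{bilinear-approx} of $a_h$. You simply unpack~\eqref{the-identity-a} explicitly through~\eqref{crucial-a} and~\eqref{grads}, which is the same chain already established upstream, so no new argument is introduced or needed.
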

\begin{proof}
Recalling that $d_T(u_I)=d_T(u)$, and
  substituting~\eqref{the-identity-a} in \eqref{bilinear-approx} gives the desired result. 
\end{proof}

Next, we show that, under certain conditions, this is a well posed
problem, and we also prove an \emph{a priori} error estimate.

\subsection{Stability and error analysis}
Error estimates and other properties of such discretization schemes
are found in~\cite{1999XuJ_ZikatanovL-aa},
\cite{2005LazarovR_ZikatanovL-aa}. Here we give an estimate for higher
order Scharfetter-Gummel discretization and assume for simplicity, and
without loss of any generality that we have Dirichlet boundary
conditions.  We have the following theorem:
\begin{theorem}\label{thm:ez}
 Assume that $a(\cdot,\cdot)$ is invertible on $V_h$. Then, 
for sufficiently small $h$, the discrete variational
  problem~\eqref{discrete-problem} is well posed and the following 
error estimate holds:
\begin{equation}\label{estimate-z}
|u_I - u_h|_{1,\Omega} \le ch^r |J(u)|_{r,p,\Omega}.
\end{equation}
\end{theorem}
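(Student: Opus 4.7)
The plan is to follow the Xu--Zikatanov strategy, adapted to the high-order \Nedelec-based construction in this paper, by combining the consistency identity in Lemma~\ref{identity-b} with a Bramble--Hilbert type estimate for the flux-approximation operator $J\mapsto J_T$, and then closing with a perturbation-of-stability argument that uses the hypothesis that $a(\cdot,\cdot)$ is invertible on $V_h$. First, using $a_h(u_h,v_h)=f(v_h)=a(u,v_h)$ together with~\eqref{id-b}, I would write the error identity
\begin{equation*}
a_h(u_I-u_h,\,v_h) \;=\; a_h(u_I,v_h)-a(u,v_h) \;=\; \sum_{T\in\mathcal{T}_h}\int_T\bigl(J_T(u)-J(u)\bigr)\cdot\nabla v_h,
\end{equation*}
where $J_T(u)=\sum_j[Z_T^{\dagger}G_T(J(u))]_j\bm{\varphi}_j\in(P_{r-1})^d$ is the approximate flux. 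The global error is thereby reduced to a sum of local $L^2$ errors $\|J(u)-J_T(u)\|_{0,T}$ tested against $\nabla v_h$.

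Next, for each $T$ I would invoke Lemma~\ref{lemma: polynomial recovery}: the operator $J\mapsto J_T$ is a projection onto $(P_{r-1})^d$. After scaling to a reference simplex $\widehat T$, and once $J\mapsto J_T$ is known to be bounded from $W^{1,p}(\widehat T)^d$ into $L^2(\widehat T)^d$ uniformly in the local data (for some $p>d$, so the \Nedelec degrees of freedom are continuous on the target space), the Deny--Lions lemma delivers
\begin{equation*}
\|J(u)-J_T(u)\|_{0,T}\le C h_T^{r}\,|J(u)|_{r,p,T}.
\end{equation*}
Elementwise Cauchy--Schwarz and summation then give $|a_h(u_I-u_h,v_h)|\le Ch^r|J(u)|_{r,p,\Omega}|v_h|_{1,\Omega}$.

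For the discrete stability of $a_h$, I would apply the same Lemma~\ref{identity-b} mechanism with $u$ replaced by an arbitrary $w_h\in V_h$, which gives $(a_h-a)(w_h,v_h)=\sum_T\int_T(J_T(w_h)-J(w_h))\cdot\nabla v_h$. Since $D,\bb$ are piecewise constant, $D\nabla w_h\in(P_{r-1})^d$ is reproduced exactly by $J_T$, while $\bb w_h$ is polynomial of degree $r$ and is approximated to $O(h)$ in $L^2$. Thus $|(a_h-a)(w_h,v_h)|\le Ch|w_h|_{1,\Omega}|v_h|_{1,\Omega}$; since $a$ admits an inf--sup condition on $V_h$ by hypothesis, $a_h$ inherits a uniform discrete inf--sup constant once $h$ is sufficiently small, and dividing the bound on $a_h(u_I-u_h,\cdot)$ through by this constant produces~\eqref{estimate-z}.

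The main technical obstacle will be the uniform boundedness of $Z_T^{\dagger}=P(P^{*}ZP)^{-1}P^{*}$ across mesh elements, since both the element size $h_T$ and the local weight $e^{-\bb\cdot D^{-1}\bx}$ enter into $Z$. My plan for this step is to recentre coordinates at the barycentre of $T$, so that $|\bb\cdot D^{-1}\bx|=O(h_T)$ on $T$ and the exponential weight is $1+O(h_T)$; then $P^{*}ZP$ is an $O(h_T)$ perturbation of its unweighted counterpart, which is invertible with an $h_T$-independent lower bound by classical \Nedelec unisolvence. The non-perturbative invertibility for arbitrary $\bb$ is guaranteed by Lemma~\ref{lemma-unisolvence}, and a scaling-aware, quantitative version of that lemma on the reference element is what ultimately closes the uniform bound and completes the program.
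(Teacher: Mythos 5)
Your proposal is correct and follows essentially the same route as the paper: the consistency bound $|a(u,v)-a_h(u_I,v)|\le ch^r|J(u)|_{r,p,\Omega}|v|_{1,q,\Omega}$ obtained from Lemma~\ref{identity-b} together with the polynomial-reproduction property of Lemma~\ref{lemma: polynomial recovery} and a Bramble--Hilbert/scaling argument, followed by a perturbation of the inf-sup condition inherited from $a(\cdot,\cdot)$ for small $h$. The extra details you supply (applying the identity to discrete $w_h$ to quantify the closeness of $a$ and $a_h$, and the uniform bound on $Z_T^{\dagger}$ via the exponential weight being $1+O(h_T)$ after recentering) are precisely what the paper compresses into the phrases ``standard scaling argument'' and ``$a$ and $a_h$ are close when $h\to 0$.''
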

\begin{proof}
From the definition of $a_h(\cdot,\cdot)$, for all $v\in V_h$ we have
\begin{eqnarray*}
|a(u,v) - a_h(u_I,v)| & = & 
\left|
\sum_{T} a_{T}(u,v)-a_{h,T}(u_I,v) 
\right|\\ 
& \le & 
\sum_{T}
\left| \int_{T} J(u)\cdot\nabla v - 
\sum_{j=1}^M [Z_T^{\dagger} G_T(J(u))]_j\int_T\bm{\varphi}_j\cdot\nabla v. 
\right|.
\end{eqnarray*}
Note that from Lemmas~\ref{identity-b} and \ref{lemma: polynomial recovery}, 
 the right side vanishes for all
$J(u)$ that are polynomials of degree $(r-1)$, and, standard scaling
argument shows the estimate
\begin{equation}\label{estimate-a}
|a(u,v) - a_h(u_I,v)| \le ch^r |J(u)|_{r,p,\Omega}|v|_{1,q,\Omega},
\quad p^{-1}+q^{-1} = 1. 
\end{equation}

The solvability of the discrete problem then follows from the fact
that by assumption $a(\cdot,\cdot)$ provides a solvable problem, and
hence it satisfies an inf-sup condition on $V_h$. According
to~\eqref{estimate-a} $a(\cdot,\cdot)$ and $a_h(\cdot,\cdot)$ are
close when $h\to 0$, and, hence, $a(\cdot,\cdot)$ also satisfies an
inf-sup condition for sufficiently small $h$. This in turn implies
that the discrete problem~\eqref{discrete-problem} is well posed. The
error estimate~\eqref{estimate-z} then follows from the
inequality~\eqref{estimate-a}. 
\end{proof}
\begin{remark}
  For the case $r=1$ our proof here is analogous to the one given
  in~\cite[Lemma~6.2 and Theorem~6.3]{1999XuJ_ZikatanovL-aa}.
\end{remark}

\section{Application to parabolic problems}\label{section-app-parabolic}
In this section, we recall the parabolic equation~\eqref{eq-model}:
\begin{equation}\label{parabolic-div-form}
\begin{array}{rcl}
&&u_t-\operatorname{div} (K(x)\nabla_x  u -\bm{\beta} u) = f, \\
&& u(x,0)=u_0(x),\quad \mbox{for}\quad t=0; \\
&&u(x,t)=0,\quad x\in \Gamma=\partial\Omega\times\{[ 0,t_{\max{}})\}.
\end{array}
\end{equation}
This equation and the
equation discretized by the streamline diffusion method match, if
$\operatorname{div}_x\bm{\beta} =0$, which we assume to hold.  In general,
the divergence form comes from a material law and many mathematical
models of physical phenomena (if not all) are in divergence form.

The space-time formulation, (written in terms of a flux $J_0$, and
with $y=(x,t)$) then is:
\begin{equation}\label{space-time-div}
\begin{array}{rcl}
&&-\operatorname{div}_y J_0(u) = f, \quad \widetilde J_0(u) = D_0(x)\nabla_x u -
   \bb  u\\
&& u(y)=0,\quad x\in \Gamma=\partial\Omega\times\{(0,t_{\max{}}]\},\\
&& u(y)=u_0(x),\quad x\in \Gamma_0=\overline{\Omega}\times\{t=0\}. 
\end{array}
\end{equation}
Here we have introduced the semidefinite, tensor valued function
$D_0(x)$, and more generally, we denote,
$D_{\varepsilon}(x):\Omega\mapsto \mathbb{R}^{(d+1)\times(d+1)}$:
\begin{equation} \label{Deps}
D_{\varepsilon}=\begin{pmatrix}
K(x)&0\\
0&\varepsilon 
\end{pmatrix},  \quad J_{\varepsilon} = D_{\varepsilon}\nabla_y u -
\bb u.
\end{equation}
The well known heat equation, $u_t-\Delta u = f$, corresponds to
$K(x)=I$ and $\bm{\beta} =0$ and $\varepsilon=0$.

The technique described in the previous section does
not work in a straightforward fashion in the case of space-time
formulation, because $D_0$ is a singular matrix. In fact, there is no
obvious construction that works in the  case of singular
$D_0$. We consider then a formulation using perturbation of the diffusion
tensor $D_\varepsilon$ and the flux $J_{\varepsilon}$.
Thus, for the parabolic problem we set
\begin{eqnarray*}
&& J_{\varepsilon}(u)= D_{\varepsilon}\nabla_y u - \bb u,\qquad  \bb   =(\bm{\beta}^T,1)^T,
\end{eqnarray*}


\subsection{Lowest order discretization for parabolic equations}
In this section we discuss the Scharfetter-Gummel discretization 
  when applied to space-time formulation of a parabolic equation, 
in the lowest order case. As a simple, but
  important example, we consider the simple case of heat equation,
  i.e. $\bm{\beta}_K=0$, which implies that $\bb=\be_{d+1}$,
  and $\be_{d+1} = (\underbrace{0,\ldots,0}_d,1)^T$.  

  We next compute the action of the local stiffness matrix
  corresponding to a parabolic problem on a vector of degrees of
  freedom $u$ representing a function in $V_h$. We fix an element
  ($(d+1)$ dimensional simplex) $T\in \mathcal{T}_h$ and we denote its
  barycentric coordinates by $\{\lambda_i\}_{i=1}^{d+2}$ and the
  space-time coordinates of its vertices are
  $\{\by_i\}_{i=1}^{d+2} = \{(\bx_i,t_i)\}_{i=1}^{d+2}$.  The degrees
  of freedom of a linear polynomial $u\in V_h$ restricted to $T$ are
  $\{u_i\}_{i=1}^{d+2}= \{u(\by_i)\}_{i=1}^{d+2}$ and we have
  $u(\by) = \sum_{i=1}^{d+2} u_i \lambda_i(\by)$. For an edge
  $E\in T$, $E=(\by_i,\by_j)$, $i=1,\ldots (d+2)$, $j=1,\ldots (d+2)$,
  we denote
\[
\btau_{ij}=\btau_E=
\frac{(\by_i-\by_j)}{|\by_i-\by_j|},\quad
|\br|=\sqrt{\sum_{l=1}^{d+1}r_l^2}, \quad \mbox{for all}\; \br\in \mathbb{R}^{d+1}.
\] 
We note that $\btau_{ij}=-\btau_{ji}$, but as we shall see, this is of
no consequence for the final form of the local stiffness matrix.  
To avoid complications in the presentation coming from unnecessary subscripts 
we will write $D$ (resp. $J$) instead of $D_{\varepsilon}$ and
(resp. $J_{\varepsilon}$).

For any $u\in V_h$, as $D$ and $J(u)$ are constants on
$T$, we have the following obvious identities from the definition of
$J$:
\begin{eqnarray*}
&& D^{-1} J \cdot \btau_E = e^{t/\varepsilon}\nabla_{y} \left(e^{-t/\varepsilon} u\right),\\
&& \int_E e^{-t/\varepsilon} D^{-1} J \cdot \tau_E dE= 
\int_E \nabla_{y} \left(e^{-t/\varepsilon} u\right)\cdot \tau_E dE,\\
&&( D^{-1} J \cdot \tau_E)\int_E e^{-t/\varepsilon} dE=
[e^{-t_i/\varepsilon}u(\bx_i,t_i) - e^{-t_j/\varepsilon}u(\bx_j,t_j)]. 
\end{eqnarray*}
Computing the integral on the left side gives
\begin{eqnarray*}
\int_E e^{-\frac{t}{\varepsilon}}dE & = & 
|E|\int_0^1 \exp\left(-\frac{t_j+s(t_i-t_j)}{\varepsilon}\right)ds \\
& = & 
\frac{|E|\varepsilon}{t_j-t_i}\int_{-t_j/\varepsilon}^{-t_i/\varepsilon} e^{\xi}d\xi  = 
|E|\varepsilon\frac{e^{-t_i/\varepsilon}-e^{-t_j/\varepsilon}}{t_j-t_i}\\
& = & 
\frac{|E|e^{-t_i/\varepsilon}}{B(\frac{t_i-t_j}{\varepsilon})}=
\frac{|E|e^{-t_j/\varepsilon}}{B(\frac{t_j-t_i}{\varepsilon})},
\end{eqnarray*}
where $B(s) = \frac{s}{e^s-1}$ is the Bernoulli function ($B(0)=1$). 
Note that, $B(s) = e^{-s} B(-s)$ and $B(s)>0$ for all $s\in \mathbb{R}$.
We then conclude that on every edge $E$ in $T$ we have:
\begin{equation}\label{d1j}
|E|( D^{-1} J \cdot \tau_E)=
B\left(\frac{t_i-t_j}{\varepsilon}\right)u(\by_i) - 
B\left(\frac{t_j-t_i}{\varepsilon}\right)u(\by_j).
\end{equation}
In the derivation for general order of polynomials we needed the
\Nedelec basis and spaces. In the lowest order case, we can take a
route that does not use these spaces explicitly.  In the evaluation of
the stiffness matrix entries, we need to compute integrals of the form
\[
\int_T(J\cdot \nabla_y\lambda_j)=|T|(J\cdot \nabla_y\lambda_j).
\] 
We note that since $D^{-1}J$ is a constant on $T$, we can write it as a gradient of a linear function, namely
\begin{equation}\label{d1jgrad}
J = D(D^{-1}J)=D\nabla_y (D^{-1}J\cdot \by) = 
\sum_{i=1}^{d+2}(D^{-1}J\cdot\by_i)D\nabla_y\lambda_i.
\end{equation} 
Since $\sum_{i=1}^{d+2} \nabla_y \lambda_i\equiv 0$ on $T$, we have that 
\begin{equation*}
0=(D^{-1}J\cdot\by_j)(D\sum_{i=1}^{d+2}\nabla_y\lambda_i\cdot\nabla_y\lambda_j)=
\sum_{i=1}^{d+2}(D^{-1}J\cdot\by_j)(D\nabla_y\lambda_i\cdot\nabla_y\lambda_j)
\end{equation*} 
Hence, 
\begin{eqnarray*}
|T|(J\cdot \nabla_y\lambda_j) &=&  
\sum_{i=1}^{d+2}(D^{-1}J\cdot \by_i)(D\nabla_y\lambda_i\cdot\nabla_y\lambda_j)\\
&=&\sum_{i\neq j}(D^{-1}J\cdot(\by_i-\by_j))(D\nabla_y\lambda_i\cdot\nabla_y\lambda_j)\\
&=&\sum_{i\neq j}|E|(D^{-1}J\cdot\btau_{ij})(D\nabla_y\lambda_i\cdot\nabla_y\lambda_j). 
\end{eqnarray*} 
We have computed earlier (see~\eqref{d1j}) the quantity 
$|E|(D^{-1}J\cdot\btau_{ij})$ for all $E\subset\partial T$. Therefore,  
\begin{equation}\label{stiff}
|T|(J\cdot \nabla_y\lambda_j) =
\sum^{d+2}_{i=1; i\neq j} d^T_{ji}\left\lbrack  
B\left(\frac{t_i-t_j}{\varepsilon}\right)u_i - 
B\left(\frac{t_j-t_i}{\varepsilon}\right)u_j 
\right\rbrack.
\end{equation} 
Here $d^T_{ji}=\int_TD\nabla_y \lambda_i\cdot\nabla_y \lambda_j$ 
are the entries of the local stiffness matrix corresponding to the 
discretization of $(-\operatorname{div} D\nabla)$ with linear elements on $T$. 
Therefore on $T$ we get
\begin{equation}\label{matrix}
[A_T]_{jj} =  -\sum_{i=1;i\neq j}^{d+2}d^T_{ji}B\left(\frac{t_j-t_i}{\varepsilon}\right), 
\quad 
[A_T]_{ji} =  d^T_{ji}B\left(\frac{t_i-t_j}{\varepsilon}\right). 
\end{equation}
The global stiffness matrix is assembled from $A_T$. It is invertible
for sufficiently small mesh size, invertible whenever
the assembly of $d_{ij}^T$ gives an $M$-matrix. 

For more detailed discussions about sufficient conditions which lead
to a stiffness matrix which is an $M$-matrix, as well as relations to
finite volume methods we refer to
\cite{1998BankR_CoughranW_CowsarL-aa}.  More importantly, the
work~\cite{1998BankR_CoughranW_CowsarL-aa} provides techniques for
consistent modification of the local stiffness matrices, leading to
solvable linear systems for wide range of meshes.  In 2 dimensions, a
sufficient condition for the stiffness matrix to be an $M$-matrix is
that the triangulation is a Delaunay triangulation which is easily
achieved by any standard mesh generator. For spatial dimensions
greater than 2, meshes satisfying the condition given
in~\cite[Lemma~2.1]{1999XuJ_ZikatanovL-aa} yields discretization with
$M$-stiffness matrix. If this condition is violated by the mesh, then
the techniques proposed in~\cite{1998BankR_CoughranW_CowsarL-aa} can
be used to modify the consistently the local stiffness matrices so
that the resulting global stiffness matrix is an $M$-matrix.

\section{Numerical tests}\label{section:numerical-tests}
We consider the 2D heat equation with Dirichlet boundary conditions on
the unit square $(0,1)\times (0,1)$. We test both schemes:
StreamLineDIffusion (SLDI) and EAFE on a uniform
triangulation of the unit square. The exact solution is
$$
U(x,t) = e^{-t} \sin\pi x\sin \pi y. 
$$
The domain is the unit square in 2D, and the space-time
problem is solved as fully coupled 3D convection diffusion
problem. 
\begin{figure}[!htb] \centering
\subfloat[]
{
\includegraphics*[width=0.5\textwidth]{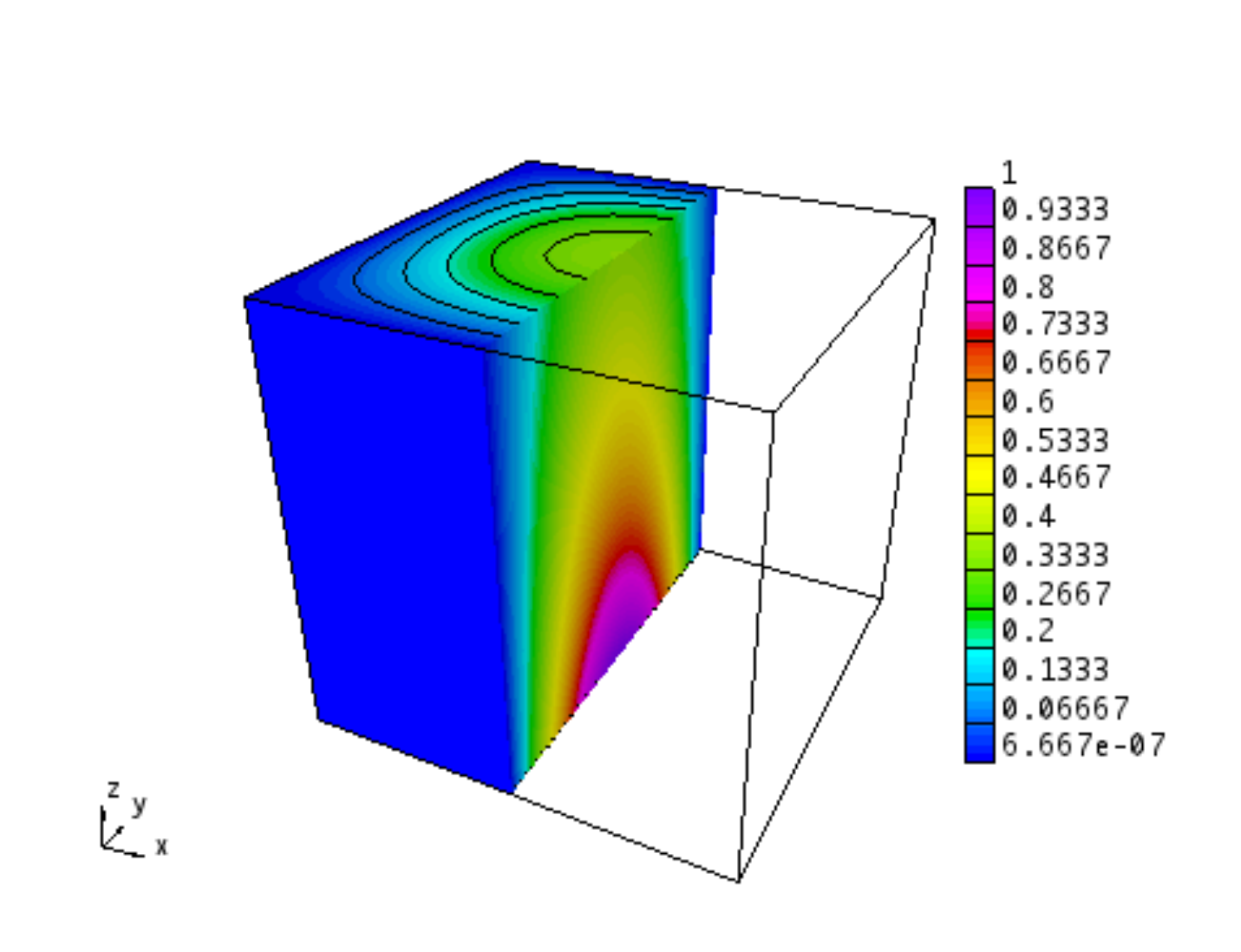}
\label{solution}
}\hspace{3em}
\subfloat[]
{
\includegraphics*[height=0.4\textwidth]{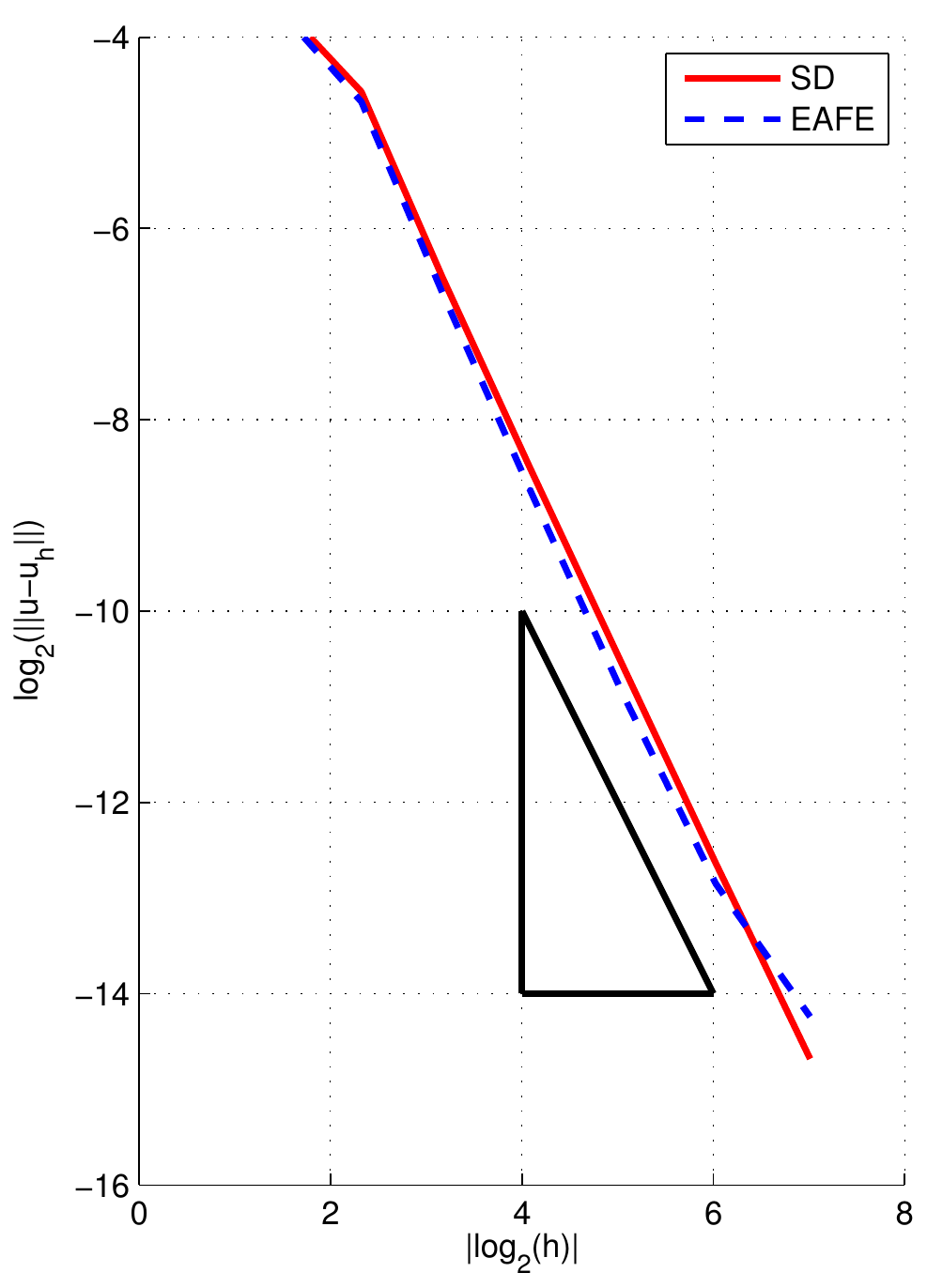}
\label{convergence}
}
\caption{(a) 
Trace of the solution on the plane $x=\frac12$. (b) Error reduction in
$L^2$-norm. Quadratic convergence is  clearly observed.}
\label{convergence-and-solution}
\end{figure}

We have tested the lowest order streamline diffusion
scheme which has the same number of degrees of freedom as the EAFE scheme. 
The convergence behavior of both discretizations is shown in~Figure~\ref{convergence}. 

We have tested the
convergence on a family of successively refined triangulations. The
coarsest one has a mesh size $h_0\approx\frac12$ and the finest
$2^{-8}$ in 3D. The parameter $\varepsilon$ in the diffusion tensor
$D_\varepsilon$ for the EAFE scheme was $10^{-5}$ on all grids. 
The parameter $\theta$ in the streamline
diffusion method was set to $10^{-2}$ on all grids.  Such
pool of tests corresponds to mesh with 27 vertices on the coarsest
grid, and, $\approx 2.1\times 10^6$ vertices on the finest grid.
In~Figure~\ref{solution} we have plotted the trace of the approximate
solution on the plane $x=0$. The approximate solution obtained via the
EAFE scheme looks exactly the same, as is also the exact solution. 

We next show a plot of a solution to an equation with 
convection depending on time. The equation is
\begin{equation}\label{reb}
 u_t - \operatorname{div}(K(x)\nabla u - \bb u)= 1,\quad x \in \Omega_s,
\quad \bb = \begin{pmatrix}
100\sin(6\pi t)\\
0
\end{pmatrix}
\end{equation}
and the boundary and initial conditions are homogeneous, the domain is the unit square and the time interval is $(0,1)$. The solution
via the Scharfetter-Gummel (EAFE) scheme is shown in Figure~\ref{fig-2}. Note
that with such convection term, the convection is $0$ for $t=k/6$ and
$k$ integer; it is, however, convection dominated for other values of $t$. 
\begin{figure}[!htb] \centering
\includegraphics*[width=0.625\textwidth]{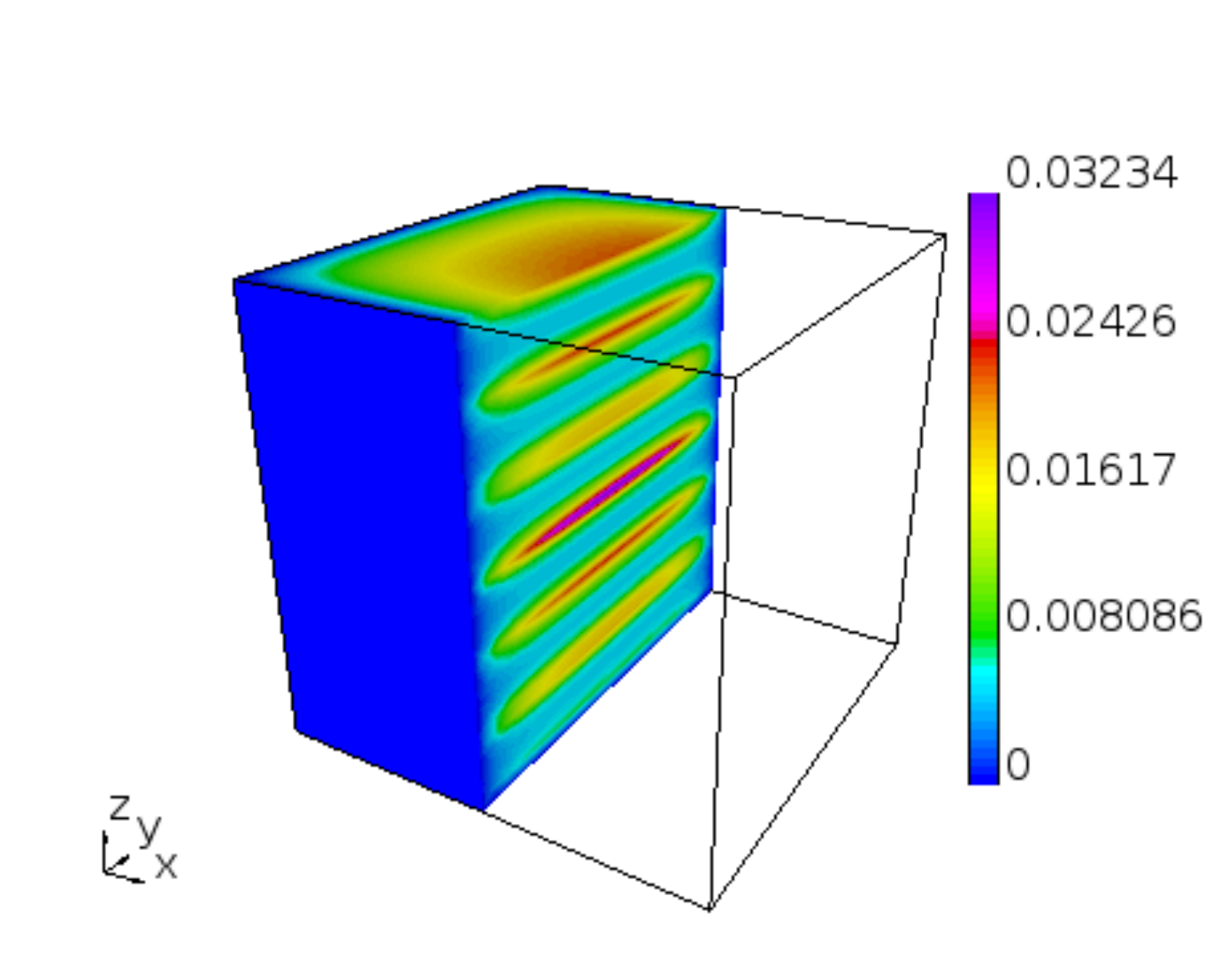}
\caption{Trace of the numerical solution of equation~\eqref{reb} on
  the plane $x=\frac12$. The effect of the time dependent convection
  is clearly seen in the plot. \label{fig-2}}
\end{figure}

We have mentioned already the software used in performing the
tests. In summary, we have used the \verb|C++| library and examples
from the \verb|mfem| package~\cite{mfem-library} (discretization); The solutions of the
resulting linear systems are done using the Algebraic Multigraph
Multilevel ILU algorithm by Bank and
Smith~\cite{2002BankR_SmithR-aa,multigraph-solver} found
at~\url{http://ccom.ucsd.edu/~reb/software.html}. The visualization
was done using the \verb|glvis| tool~\cite{glvis-tool}. 

\section{Concluding remarks}
We introduced a class of numerical methods for convection diffusion
equations in arbitrary spatial dimensions. In principle, these schemes
can be applied to wide range of problems, such as linearization of the
Nernst-Plank equations for transport of species in a charged media and
the space-time discretizations of such equations. We have derived
novel exponentially fitted (higher order Scharfetter-Gummel and
streamline diffusion) discretizations for convection diffusion
equations.  Distinctive features of the proposed Scharfetter-Gummel
discretization are: (1) its the monotonicity in the lowest order case;
(2) its applicability in any spatial dimension; and (3) the a priori
estimates are in terms of the flux only.  For order higher than 1, the
derivation and the analysis of this scheme is new, and its
implementation for space-time formulation of parabolic problems is a
subject of a current research.

\appendix
\section{Remarks on unisolvence for the quasi-polynomial \Nedelec space}\label{appendix}
To keep the presentation self contained, in this appendix, we give a
proof of Lemma~\ref{lemma-unisolvence} using vector proxies.  Recall
that Lemma~\ref{lemma-unisolvence} states that the \Nedelec degrees
of freedom are unisolvent set of functionals for the quasi-polynomial
space
\[
(eP_{r-1})^d=\{e^{(-\bq\cdot \bx)}D^{-1}\bp \;\big|\; \bp\in (P_{r-1})^d\}.  
\] 
To prove Lemma~\ref{lemma-unisolvence} using vector proxies in
$\mathbb{R}^n$ and with arbitrary polynomial degree $r$ we recast some
of the results from~\cite[Section~4]{2006ArnoldD_FalkR_WintherR-aa}
for the quasi-polynomial spaces in terms of vector proxies.  This is
done with the aim to keep the notation in accordance with what we
used before.

\subsection*{Quasi-polynomial spaces: A two dimensional example} To
illustrate the difficulties that occur when using quasi-polynomial
spaces instead of polynomial spaces, let us look closely at the case
$\Omega\subset\mathbb{R}^2$, i.e. $\mathcal{T}_h$ is a
triangulation. The \Nedelec degrees of freedom are the $0$-th moments
on the edges of the triangulation, namely
\(\langle \eta_e,v\rangle = \int_e v\cdot\tau_e\).
The polynomial space $P^{\mathcal{N}}$ on a
$T\in \mathcal{T}_h$ is the space of vector valued linear polynomials
of the form:
\[\bp(\bx) = \ba + \beta \begin{pmatrix}-x_2\\ x_1\end{pmatrix}.\]
Clearly, we have that $(P_0)^d\subset P^{\mathcal{N}}\subset (P_1)^d$. 

Next, to show that the set of \Nedelec degrees of freedom is a unisolvent
set of degrees of freedom for $P^{\mathcal{N}}$ it is sufficient to show that:
If $\bp\in P^{\mathcal{N}}$ and $\langle\eta_e,\bp\rangle=0$ for
all $e\in \partial{T}$, then $\bp\equiv 0$. The proof follows by
integration by parts:
\[
0=\sum_{e\in \partial T}\int_{e} \bp\cdot \bm{\tau}_e = -\int_T \operatorname{rot} \bp=|T|\beta. 
\]
This implies that $\beta=0$, and hence $\bp=\ba$ is a constant
vector. As the degrees of freedom vanish we have that
\[
0=\int_e\bp\cdot \bm{\tau}_e = |e|(\bp\cdot \bm{\tau}_e), 
\quad \mbox{for all}\quad e\in \partial T. 
\]
This implies that $\bp\equiv 0$, because any two of the edges of $T$
are linearly independent for a non-degenerate simplex
$T\subset \mathbb{R}^2$ and $\bp$ is a constant vector orthogonal to
them.

Let us now look at the case of quasi-polynomial space. We set for
simplicity $D=I$ and consider the quasi-polynomial space
\[
eP^{\mathcal{N}}=\{e^{(-\bq\cdot \bx)}\bp \;\big|\; \bp\in P^{\mathcal{N}}\}. 
\] 
It is easy to see that first part of the argument given above
fails. Indeed, if $\bp\in eP^{\mathcal{N}}$ we have
\begin{equation}\label{z}
0=\sum_{e\in \partial T}\int_{e} e^{-\bq\cdot\bx}\bp\cdot \bm{\tau}_e 
= -\int_T \operatorname{rot} (e^{-\bq\cdot\bx}\bp).
\end{equation}
In general, the above relation does not imply that $\beta=0$ and we
cannot conclude from~\eqref{z} that $\bp$ is a constant.

If we reduce the degree of the polynomial space, however, the argument
works. Indeed, let us take $\bs\in (eP_0)^2$ such that
$\bs=e^{-\bq\cdot\bx} \bp$ for some $\bp\in (P_0)^2$ and assume
$\langle\eta_e,\bs\rangle=0$ for all $e\in \partial T$. We then have
\[
0=
\int_e e^{-\bq\cdot\bx}\bp\cdot \bm{\tau}_e = 
(\bp\cdot \bm{\tau}_e)\int_e e^{-\bq\cdot\bx}, 
\]
and this clearly gives $(\bp\cdot \bm{\tau}_e)$ for all
$e\in \partial T$, which tells us that $\bp\equiv 0$ and hence $\bs\equiv 0$. 

As a conclusion we have shown that the \Nedelec degrees of freedom are
unisolvent set of degrees of freedom (functionals) for $(eP_0)^2$, but not
necessarily for $eP^{\mathcal{N}}$. Generalizing such statement to
arbitrary spatial dimension and polynomial degrees requires a bit more
work and is detailed in what follows.

\subsection*{Simplices and barycentric coordinates}
As $D$ is a constant matrix, we may assume $D=I$, because
this does not change the degree of the quasi-polynomial space. 

The following itemized list contains well known definitions and facts
related to the geometry of simplices in $\mathbb{R}^d$. 
\begin{itemize}
\item The vertices of $T$ are denoted by $[\bx_0,\ldots,\bx_d]$. We
  assume that we have ordered the vertices in such a way that
  $\{\bm{\tau}_m=(\bx_m-\bx_0)\}_{m=1}^{d}$ form a positively oriented basis in
  $\mathbb{R}^d$. 

\item For $k$-vectors $\{\bv_j\}_{j=1}^k$ in $\mathbb{R}^n$, we
  introduce the volume of the $k$-simplex with one vertex at the
  origin and the other vertices at the coordinates of the
  $k$-points. We set
\[
\operatorname{vol}(\bv_1,\ldots,\bv_k) = \pm \frac{1}{k!}\sqrt{\det(V^T V)}, \quad
V=(\bv_1,\ldots,\bv_k)\in \mathbb{R}^{n\times k}. 
\]
The plus sign is chosen if $\{\bv_1,\ldots,\bv_k\}$ is a positively
oriented frame.  The volume is obviously zero if the vectors are linearly
dependent. 

\item The set of all sub-simplices of $T$ of dimension $j$ can be
  identified by the monotone set of indices $\sigma=[i_0,\ldots i_j]$, with
  $i_0< \ldots <i_j$. For any such set of indices, we denote by
  $\sigma^{c}$ its complement in $\{0,\ldots, d\}$. 
The elements of $\sigma$ and $\sigma^c$ are always assumed to be in increasing
  order. In general $s_\sigma$ will denote a sub-simplex of $T$ of
  dimension $|\sigma|$ (with $s_{\{1:d\}}=T$), and every such simplex is the
  convex hull of any $(|\sigma|+1)$ vertices of $T$, i.e.
  $s_\sigma=\operatorname{chull}(\bx_{\sigma})=\operatorname{chull}(\bx_{i_1},\ldots \bx_{i_j})$, with  $i_0< \ldots < i_j$.


\item There is a canonical affine map $\Phi: \widehat T \mapsto T$
  with domain the simplex $\widehat T\subset \mathbb{R}^d$ with vertices
  $[\bm{0},\be_1,\ldots,\be_{d}]$. This map is described by a
  $(d\times d)$ matrix $B$, and
\[
\Phi(\widehat \bx) = \bx_0 + B \widehat \bx, \quad
B=(\btau_1 | \ldots | \bm{\tau}_d). 
\]
If $\widehat s_j$ is the reference simplex
in $\mathbb{R}^j$ with vertices  $[\bm{0},\be_1,\ldots,\be_{j}]$, then we also have a map (induced by
$\Phi$), $\Phi_j:\widehat s_j\mapsto s_\sigma$ 
\[
\Phi_\sigma(\widehat \bx) = \bx_0 + B_\sigma\widehat \bx, \quad
B_{\sigma}=(\bm{\tau}_\sigma)=(\btau_{l_1}| \ldots | \bm{\tau}_{l_j}), \quad \widehat \bx\in
\widehat s_j. 
\]

\item $\{\lambda_j\}_{j=0}^d$ denote the barycentric coordinates of
  $T$. Note that $\{\nabla \lambda_m\}_{m=1}^d$ form a basis for
  $\mathbb{R}^d$. In fact, with the definitions above it is easily
  verified that
\[
  (\nabla\lambda_m,\bm{\tau}_j)_{\ell^2}=
  (B^{-T}\nabla_{\widehat x}\widehat \lambda_m,B\be_j)_{\ell^2}=
  (\nabla_{\widehat x}\widehat \lambda_m,\be_j)_{\ell^2}=\delta_{mj}. 
\]
In addition, for any vector field $\bu(\bx)\in (P_{r-1})^n$ and any
$n$, we can use the basis $\{\nabla \lambda_j\}_{j=1}^n$ to write 
\begin{equation}\label{lambda-expansion}
\bu(\bx) = \sum_{j=1}^n u_j \nabla\lambda_j, \quad u_j(\bx)\in
P_{r-1}. 
\end{equation}
\item Let $s_{\sigma}$, $\sigma=(l_1,\ldots,l_k)$ be a subsimplex of
  $T$.  Let $\nabla_s\lambda_j$, $j\in\sigma$ be the surface gradient
  of $\lambda_j$ when on the hyperplane spanned containing
  $s_{\sigma}$. This is another basis for
  $\operatorname{span}\{\bm{\tau}_{\sigma}\}$ as is easily verified.
  We note that $\nabla_\sigma\lambda_j= \pi_\sigma\nabla\lambda_j$, where
  $\pi_\sigma$ is an orthogonal projection on the span of $(\bm{\tau})_{\sigma}$. We have
\[
\pi_\sigma \nabla\lambda_j = B_{\sigma} (B_\sigma^TB_{\sigma})^{-1} B_{\sigma}^T\nabla\lambda_j 
= B_{\sigma} (B_\sigma^TB_{\sigma})^{-1}\nabla_{\widehat\bx}\widehat \lambda_j. 
\] 
Note that $\pi_{\sigma}\nabla \lambda_k=0$ if $k \notin \sigma$ (the 
gradient is normal to the zero level set of $\lambda_k$), and, we also have, 
$\pi_\sigma \nabla\lambda_{l_j} =
B_{\sigma}(B_\sigma^TB_{\sigma})^{-1}\be_{j}$ for $j=1,\ldots,k$,
where $\be_j\in \mathbb{R}^k$.

\item By trace of a vector field $\bv$ on a sub-simplex $\sigma$ we mean 
\[
\operatorname{Tr}_{\sigma}(\bv) = \pi_\sigma \bv(\bx), \quad \bx\in s_\sigma. 
\]
\item For an $n$-dimensional simplex $s\subset \mathbb{R}^n$,
  $s=[\bx_1,\ldots\bx_n]$ $n\le d$ the zero level sets of $\lambda_j$
  we denote by $\{F_j(s)\}_{j=0}^{n}$ are often also referred to as
  ``faces'' of $s$, and are the $(n-1)$ dimensional simplices opposite
  to the $0$-dimensional simplices (vertices) of $s$. In short,
  $F_{j}(s)=s_{\sigma}$ faces the vertex $\bx_j$. Here, for a fixed
  $j$, $\sigma=\{0,\ldots,j-1,j+1,\ldots,n\}=\{j\}^c$. 

\end{itemize}

Next, using the facts above, we now prove a well known
result~\cite[Lemma~4.5]{2006ArnoldD_FalkR_WintherR-aa} and use the
language of vector proxies.The changes in the proof
of~\cite[Lemma~4.5]{2006ArnoldD_FalkR_WintherR-aa} needed to account
for the fact that we have a quasi-polynomial space, instead of
polynomial space, are obvious. Note that the face $F_0$ is excluded
from the statement of the lemma.
\begin{lemma}\label{bubbles}
Let $s$ be a simplex in $\mathbb{R}^n$ and 
$\bu\in (eP_{r-1})^n$ be such that its traces on the faces
of $s$, $\{F_j(s)\}_{j=1}^n$ vanish. Then, 
\[
\bu = \sum_{j=1}^n\left(\prod_{k\neq j} \lambda_k\right)\widetilde{u}_j \nabla \lambda_j, 
\] 
where $\widetilde{u}_j\in eP_{r-n}$. 
\end{lemma}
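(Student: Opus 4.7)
The plan is to imitate the standard Arnold--Falk--Winther argument (Lemma~4.5 in their paper), adapting it to the quasi-polynomial setting by absorbing the exponential factor. The key observation is that multiplication by $e^{-\bq\cdot\bx}$ is an invertible pointwise transformation, so vanishing of a quasi-polynomial on a hyperplane is equivalent to vanishing of its polynomial part there.

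First, I would expand $\bu$ in the basis $\{\nabla\lambda_j\}_{j=1}^n$ of $\mathbb{R}^n$ noted just before the lemma. Writing $\bu=e^{-\bq\cdot\bx}\bp$ with $\bp\in(P_{r-1})^n$ and applying~\eqref{lambda-expansion} to $\bp$, I obtain a representation
\[
\bu=\sum_{j=1}^n u_j\,\nabla\lambda_j,\qquad u_j\in eP_{r-1}.
\]
The task then reduces to showing that each scalar coefficient $u_j$ is divisible by $\prod_{k\neq j}\lambda_k$ (with the quotient still lying in $eP_{r-n}$).

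Next I would translate the vanishing of the tangential traces into vanishing conditions on the $u_j$. Fix a face $F_j=s_{\{j\}^c}$; since $\nabla\lambda_j$ is normal to $F_j$, one has $\pi_{F_j}\nabla\lambda_j=0$, while for $k\neq j$ the vectors $\pi_{F_j}\nabla\lambda_k=\nabla_{F_j}\lambda_k$ form a basis of the tangent space of $F_j$ (this is the observation in the sub-simplex bullet about $\pi_\sigma\nabla\lambda_{l_j}$ becoming an element of a coordinate basis after the change to $\hat\bx$). Therefore
\[
0=\operatorname{Tr}_{F_j}\bu=\sum_{k\neq j} u_k\,\nabla_{F_j}\lambda_k\ \text{on } F_j
\]
forces $u_k|_{F_j}=0$ for every $k\neq j$. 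Ranging $j$ over $\{1,\ldots,n\}$, I conclude that for each fixed $k$ the coefficient $u_k$ vanishes on every $F_j$ with $j\in\{1,\ldots,n\}\setminus\{k\}$.

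Finally, since $e^{-\bq\cdot\bx}>0$ everywhere, writing $u_k=e^{-\bq\cdot\bx}p_k$ with $p_k\in P_{r-1}$, the vanishing above is equivalent to $p_k=0$ on each hyperplane $\{\lambda_j=0\}$, $j\neq k$. By the standard polynomial factorization argument, $p_k$ is therefore divisible by $\prod_{k'\neq k}\lambda_{k'}$, and the quotient $\widetilde p_k$ is a polynomial of degree $r-1-(n-1)=r-n$. Setting $\widetilde u_k=e^{-\bq\cdot\bx}\widetilde p_k\in eP_{r-n}$ gives the required representation. The main subtle point is the second step: verifying that the tangential surface gradients $\{\nabla_{F_j}\lambda_k\}_{k\neq j}$ really are linearly independent on $F_j$ so that the vanishing of the trace can be read off coefficient-wise; this is exactly the content of the projection identity $\pi_\sigma\nabla\lambda_{l_j}=B_\sigma(B_\sigma^TB_\sigma)^{-1}\be_j$ recorded before the lemma, so once that is invoked, the rest of the argument is polynomial divisibility, not affected by the exponential weight.
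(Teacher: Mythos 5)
Your proposal is correct and follows essentially the same route as the paper's own proof: expand $\bu$ as $e^{-\bq\cdot\bx}\sum_j u_j\nabla\lambda_j$ via~\eqref{lambda-expansion}, use that $\{\pi_{F_k}\nabla\lambda_j\}_{j\neq k}$ form a basis of the hyperplane containing $F_k$ to conclude $u_j|_{F_k}=0$ for $j\neq k$, and then factor out the $\lambda_k$'s. Your write-up is in fact slightly more explicit than the paper's on the final divisibility and degree count $r-1-(n-1)=r-n$, but there is no substantive difference in the argument.
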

\begin{proof}
We use~\eqref{lambda-expansion} to write 
\begin{equation}
\bu(\bx) = \exp(-\bq\cdot \bx)\left(\sum_{j=1}^n u_j \nabla\lambda_j\right),
\end{equation}
with $u_j\in P_{r-1}$. Let us fix a $k$ and recall that the trace of
$\bu$ on $F_k(s) =  \{\bx \;\big|\;\lambda_k=0\}$ vanishes. As we
pointed out, 
$\pi_{F_k} \nabla\lambda_j$, $j\neq k$, form a basis on the hyperplane
containing $F_k$. Further, for $\bx\in F_k$, by the
definition fo the trace of $\bu$ on $F_k$, we have
\[
\operatorname{Tr}_{F_k}\bu(\bx) = 
\exp(-\bq\cdot \bx)\sum_{j=1}^n u_j \pi_{F_k}\nabla\lambda_j
=\exp(-\bq\cdot \bx)\sum_{j\neq k} u_j \pi_{F_k}\pi_{F_k}\nabla\lambda_j,
\]
As $\{\pi_{F_k}\nabla\lambda_j\}_{j\neq k}$ form a basis for the
hyperplane containing $F_k$, we have that $v_j=0$ for $\bx\in F_k$ for
all $j\neq k$ and hence $u_j = \lambda_kv_j$, where $v_j\in
P_{r-2}$. Applying this argument for $k=1,\ldots,n$ concludes the
proof. 
\end{proof}

\subsection*{Basic facts about differential forms} 
When next introduce some basic facts about differential forms. We need
these to define the action of the functionals $\eta_j$, which we
introduced in~\S\ref{section:Nedelec}. In $\mathbb{R}^3$ these were
introduced by \Nedelec~\cite{1980NedelecJ-aa,1986NedelecJ-aa} (see
also~\cite{1988BossavitA-aa,2003MonkP-aa}). Our goal here is to prove
unisolvence for any spatial dimension and any polynomial degree. That
brings us into the framework of Finite Element Exterior Calculus, in
the spirit
of~\cite{1999HiptmairR-aa,2006ArnoldD_FalkR_WintherR-aa,2010ArnoldD_FalkR_WintherR-aa}.
Of course, we restrict our discussion to vector valued functions (a
flux $J(u)$, and,
hence we can drop some of the generalities since we consider
differential $1$-forms only.

The space of differential $k$-forms on $T$ we denote by $F\Lambda^k$
\begin{eqnarray*}
  &&F\Lambda^{k}=\{\omega\;\big|\;\omega = 
\sum_{\sigma}f_\sigma(\bx)dx_\sigma\}, \;\; f_\sigma(\bx)\in C^{\infty}(T). 
  \\ 
  && \sigma=\{i_1,\ldots i_l\}\subset\{1,\ldots d\},
     d\bx_\sigma = dx_{i_1}\wedge\ldots\wedge dx_{i_l}, 
\end{eqnarray*}
The summation is over all subsets $\sigma\subset \{1,\ldots,d\}$ of
length $|\sigma|=k$. 

If the coefficients $f_\sigma$ are polynomials of degree at most
$(r-1)$, \textit{i.e.} $f_\sigma\in P_{r-1}$ (resp.
$f_I\in eP_{r-1}$), we denote the polynomial space of such $k$-forms
by $P_{r-1}\Lambda^k$ (resp.  $eP_k\Lambda^k$). By convention, all
these spaces are trivial if $k<0$ or $r<1$. The exterior product is
denoted by ``$\wedge$'' and the symbol
``$d\omega\in \mathcal{D}F^{k+1}$'' denotes the exterior derivative of
a form $\omega\in F\Lambda^k$. By definition we have
\[
\mbox{If}\;\;\omega = w(\bx) d x_\sigma,\; \;\mbox{then}\;\;
d\omega = \sum_{j=1}^d \partial_j w(\bx) dx_j\wedge dx_{\sigma}.
\]
A basis in the space of $k$-forms is given by 
$d x_\sigma=dx_{l_1}\wedge\ldots\wedge dx_{l_k}$. 
On a simplex $T$ such
basis is also provided by $d\lambda_j$, where $\lambda_j$,
$j=1,\ldots,d$ are the barycentric coordinates in $T$ as described
earlier. 

Let us note, though, that we do not need the exterior derivative to
prove the unisolvence result. We introduced it to clarify some
differences between using quasi-polynomial spaces, instead of regular
polynomials.  The functionals $\eta_j$ are dual to the \Nedelec basis,
which spans the classical \Nedelec space of degree $r$:
$P^{\mathcal{N}}_{r} = P_{r-1}\Lambda^1\oplus \widetilde
P_{r}\Lambda^1$,
where $\widetilde P_{r}$ denotes the space of all homogenous
polynomials of degree $r$. Clearly, from this we see that
$\dim P^{\mathcal{N}}_{r} > \dim P_{r-1}\Lambda^1$.

Next, when proving unisolvence for regular polynomial spaces, 
we have that the exterior differentiation decreases the degree of the
polynomials by 1, namely, 
$dP_{r-1}\Lambda^k\subset P_{r-2}\Lambda^{k+1}$. For the
quasi-polynomial space this is not the case, and we only have
$dP_{r-1}\Lambda^k\subset P_{r-1}\Lambda^{k+1}$. As a consequence, the
proofs of unisolvence of $\eta_j$ on the space of polynomials do not
extend directly to the case of quasi-polynomials generated by
$P^{\mathcal{N}}_r$. This explains our choice in
\S\ref{section:Nedelec} of $eP_{r-1}\Lambda^1$ instead of
$eP^{\mathcal{N}}_r$. 
 

\subsection*{Integration of forms}  
A $k$-form can be integrated on a subsimplex of dimension $k$ (only),
and, for any such subsimplex $s_{\sigma}\subset T$, and any
$\omega=w(\bx)dx_L$, where
$L=\{l_1,\ldots,l_k\}\subset \{1,2,\ldots,d\}$,
$dx_L=dx_{l_1}\wedge\ldots\wedge dx_{l_k}$ we define (see, for
example, Rudin~\cite{1976RudinW-aa}):
\[
\int_{s_{\sigma}}\omega := \int_{\widehat{s}_k} w(\Phi_{\sigma}(\widehat{\bx}))
\frac{\partial_L\Phi_\sigma}{\partial(\widehat x_1,\ldots,\widehat
  x_l)}d\widehat x_1\ldots d\widehat x_k.
\]
This is just a definition of an integral of a differential form, and,
the integral on the right side is to be understood as ordinary
integral over the reference $k$-simplex
$\widehat s_k\subset \mathbb{R}^k$.  The map
$\Phi_{\sigma}:\widehat s_{\sigma}\mapsto \mathbb{R}^d$ was defined
earlier and
$\frac{\partial_L\Phi_\sigma}{\partial(\widehat x_1,\ldots,\widehat
  x_k)}$
denotes the determinant of a $k\times l$ submatrix of its Jacobi
matrix corresponding to the indicies in $L$:
\[
\frac{\partial_L\Phi_\sigma}{\partial(\widehat x_1,\ldots,\widehat
  x_l)}=
\det
\begin{pmatrix}
\partial_1 \Phi_{\sigma,i_1} &
\ldots & \partial_l \Phi_{\sigma,i_1} \\
\vdots& \vdots& \vdots\\
\partial_1 \Phi_{\sigma, i_l} &
\ldots & \partial_l \Phi_{\sigma,i_l}
\end{pmatrix}, \quad \partial_j \Phi_{\sigma,m}=
\frac{\partial\Phi_{\sigma,m}}{\partial \widehat x_j}.
\]
In our case $\Phi_\sigma$ is an affine map, so the Jacobi matrix and its
determinant are independent of $\widehat \bx$. 
Let $B$ be the matrix that
maps $\be_j\mapsto\bm{\tau}_j$, $j=1:n$. For a monotone 
$L\subset\{1,\ldots,d\}$, $\sigma=\{i_1,\ldots,i_k\}$ 
we define $I_L\in \mathbb{R}^{d\times k}$ to be the matrix whose
range is
$\operatorname{span}\{\be_{l_1},\ldots,\be_{l_k}\}$. The definition is
\[
I_{L} := \sum_{j=1}^k \be_{l_j} [\be_j(\mathbb{R}^k)]^T.
\]
Note that $\be_j(\mathbb{R}^k)$ are the canonical basis vectors in
$\mathbb{R}^k$. Next, if $\omega = \sum_{L} w_L dx_L$ is a differential $k$-form (
(the summation is over all monotone subsets $L$ of $\{1,\ldots,n\}$,
and $L=\{l_1,\ldots,l_k\}$) 
we can write the definiton of the integral above as
\[
\int_{s_{\sigma}}\omega := 
\int_{\widehat{s}_k} \sum_{L}w_L(\Phi_{\sigma}(\widehat{\bx}))\det(B_{\sigma,L})
d\widehat x_1\ldots d\widehat x_k,
\]
where $B_{\sigma,L} = I_\sigma^T B I_L$. 

As we pointed out earlier, we
have $\be_j = B^{-T} \nabla\lambda_j$ and $\be_j = B \bm{\tau}_j$,  and
hence, the vectors tangent to $T$ and $\nabla\lambda_i$ form a
bi-orthogonal basis.  It is often convenient to represent differential
form via the basis $d\lambda_1,\ldots,d\lambda_d$. Thus for a
differential form we will have two representations
\begin{eqnarray*}
\omega &=& \sum_{L} w(\bx) dx_L\\
\omega & = & \sum_{L} v_{L}(\bx) (d\lambda)_L, \quad v_L =
             \sum_{\sigma} \det(B_{L,\sigma}) w_{\sigma}.
\end{eqnarray*}
The identity between the coefficients is easy to verify using that
\[
d\lambda_j  =
\sum_{m=1}^d (\nabla\lambda_j)_m dx_m = 
\sum_{m=1}^d (B^{-1})_{mj} dx_m. 
\]
Differential $k$-forms are elusive objects, they only manifest 
themselves when integrated over $k$-simplices. Otherwise they mean
nothing. The fact
that $\lambda_j$ and the trace of $\nabla\lambda_j$ vanishes on any
sub-simplex where $\lambda_j=0$ shows that
$\int_{s_{\sigma}} v_L(\bx)(d\lambda)_L = 0$ if $L\neq \sigma$. 
This is a nice property, because the change of variable formula 
gives us that
\[
\int_{s_{\sigma}}\omega =  \int_{s_{\sigma}} v_\sigma(\bx) d\operatorname{vol}(s).
\]
where the right side is just the ordinary integral over the simplex
$s_{\sigma}$, 
\[
d\operatorname{vol}(s) = \sqrt{\det(\Lambda^T\Lambda)}d\bx, \quad
\Lambda = 
(\pi_s\nabla\lambda_{i_1},\ldots,\nabla\lambda_{i_k})\in
\mathbb{R}^{d\times k}. 
\]

Exterior product of two differential forms is also used below for
defining degrees of freedom. If $\omega$, $\eta$ are $p$ and $q$ form
respectively, then $(\omega\wedge \eta)$ is a $(p+q)$ form. The
definition and properties of the exterior product ``$\wedge$'' are
found in standard texts.

\subsection*{Degrees of freedom}  
Since we only consider 1-forms, let us focus on this case here and 
the degrees of freedom of the classical \Nedelec space in terms of differential $1$-forms,
and later we convert to definitions involving vector proxies.
Following~\cite[Section~4]{2006ArnoldD_FalkR_WintherR-aa} and 
\cite[Definition~9]{1999HiptmairR-aa}, the degrees of freedom for
\emph{polynomial} differential 1-form $\omega\in P_r^{\mathcal{N}}$ are:
\begin{equation}\label{dof}
\langle \eta_\sigma,\omega\rangle = \int_{s_\sigma} \omega \wedge \widetilde{\eta}_\sigma, 
\quad
\widetilde{\eta}_\sigma\in P_{r-k}\Lambda^{k-1}(s_{\sigma}), 
\end{equation}
Here, the notation is as follows: 
$\sigma$ ranges over all monotone subsets of $\{1,\ldots,d\}$ of
cardinality $k$; $s_{\sigma}$ are the corresponding sub-simplices of
$T$ of dimension $k$; $\{\widetilde{\eta_j}\}$ forms a basis in for $P_{r-k}\Lambda^{k-1}$
and $\eta_j$ is the corresponding functional it generates, as defined
by the integral on the right side. As we pointed out, we consider here
only $1$-forms, which makes things easier. In such case, any
$\eta\in P_{r-n}\Lambda^{n-1}$ can be identified with a proxy-vector,  
$\bm{p}\in (P_{r-n})^n$ by identifying every $(n-1)$ dimensional
$F_i$ simplex with the vertex $i$ it opposes.  
\[
\widetilde{\eta} = \sum_{i=1}^n p_i(\bx)(d\lambda)_{F_i} \mapsto (p_i)_{i=1}^n.
\] 
Integration then of $u\wedge \widetilde{\eta}$ amounts to:
\[
\sum_j\int_{s} u_jp_j 
\operatorname{vol}(\nabla \lambda_j,\nabla\lambda_{1},\ldots\widecheck{\nabla \lambda_j},\ldots,\nabla\lambda_n)d\bx. 
\]
Here the coefficients $u_j$ are the coefficients in the
expansion~\eqref{lambda-expansion}. 

Next we prove a result analogous to the \cite[Lemma~4.6]{2006ArnoldD_FalkR_WintherR-aa}
\begin{lemma}\label{lemma-increase-dimension}
Let $s$ be an $n$-dimensional simplex and  let 
$\bu\in eP_{r-1}\Lambda^1$ be a given vector field with
vanishing traces on
the faces $F_i\subset \partial s$,
$F_i=\{\bx\;\big|\;\lambda_i(\bx)=0\}$, $i=1,\ldots,n$. Assume also
that
\[
\sum_j \int_s  \exp(-\bq\cdot\bx) u_j p_j \operatorname{vol}
(\nabla \lambda_j,(\pi_{F_j}\nabla\lambda_m)_{m\neq j}) = 0, 
\quad\mbox{for all} \bm p\in (P_{r-n}(s))^{n}.
\]
Then, $\bu=0$. 
\end{lemma}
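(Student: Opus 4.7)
The plan is to combine the bubble representation provided by Lemma~\ref{bubbles} with a positivity argument based on a sign-adjusted choice of the test proxy $\bm p$. First, I apply Lemma~\ref{bubbles} to $\bu\in eP_{r-1}\Lambda^1$, using the vanishing of its traces on $F_1,\ldots,F_n$, to obtain
\[
\bu(\bx)=\exp(-\bq\cdot\bx)\sum_{j=1}^{n}\Bigl(\prod_{\substack{k=1\\k\neq j}}^{n}\lambda_k(\bx)\Bigr) v_j(\bx)\,\nabla\lambda_j,\qquad v_j\in P_{r-n}(s).
\]
In the notation of the statement this identifies the polynomial coefficient $u_j=\bigl(\prod_{k\neq j}\lambda_k\bigr) v_j$ from the expansion~\eqref{lambda-expansion}, so the task reduces to showing that each $v_j$ is identically zero.

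Second, I record that the geometric weight $\operatorname{vol}\bigl(\nabla\lambda_j,(\pi_{F_j}\nabla\lambda_m)_{m\neq j}\bigr)$ is a nonzero constant on $s$: all gradients involved are constant on the affine simplex, $\nabla\lambda_j$ is orthogonal to the hyperplane containing $F_j$, and the $(n-1)$ projected gradients $\{\pi_{F_j}\nabla\lambda_m\}_{m\neq j}$ form a basis of that hyperplane (they are $n-1$ of the $n$ surface gradients of the barycentric coordinates of $F_j$, and nondegeneracy of $s$ makes any $n-1$ of them independent). Denote its sign by $\epsilon_j\in\{+1,-1\}$; since the vectors involved are $\bx$-independent, so is $\epsilon_j$.

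Finally, I instantiate the orthogonality hypothesis with the test proxy $\bm p=(\epsilon_1 v_1,\ldots,\epsilon_n v_n)\in (P_{r-n}(s))^n$. Using $\epsilon_j^2=1$ together with the factorization $u_j=(\prod_{k\neq j}\lambda_k)v_j$ yields
\[
0=\sum_{j=1}^{n}\int_s \exp(-\bq\cdot\bx)\Bigl(\prod_{\substack{k=1\\k\neq j}}^{n}\lambda_k\Bigr) v_j^{2}\,\bigl|\operatorname{vol}\bigl(\nabla\lambda_j,(\pi_{F_j}\nabla\lambda_m)_{m\neq j}\bigr)\bigr|\,d\bx.
\]
Each integrand is pointwise nonnegative on $s$ and strictly positive on its interior $s^\circ$, because the exponential is positive everywhere, the barycentric coordinates are positive on $s^\circ$, and the volume modulus is a positive constant. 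Therefore each summand vanishes separately, so $v_j^2\equiv 0$ on $s^\circ$, whence by polynomiality $v_j\equiv 0$ on all of $s$, and $\bu\equiv 0$.

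The main technical obstacle is step two — pinning down that the volume factor has a fixed, nonzero sign per $j$ — because without this the right-hand side after testing would not be a sum of nonnegative integrals. Once this geometric fact is in hand, the positivity argument with $\bm p=(\epsilon_j v_j)_j$ is essentially forced by the bubble decomposition, and the proof mirrors the quasi-polynomial adaptation of \cite[Lemma~4.6]{2006ArnoldD_FalkR_WintherR-aa} claimed in the proof sketch of Lemma~\ref{lemma-unisolvence}.
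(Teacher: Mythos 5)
Your proof is correct and follows essentially the same route as the paper: apply Lemma~\ref{bubbles} to get the bubble factorization and then test with a proxy built from the coefficients $\widetilde u_j$ themselves, concluding by positivity of $\prod_{k\neq j}\lambda_k$ (and the exponential) in the interior of $s$. The only difference is that you make explicit the sign adjustment $\epsilon_j$ for the constant, nonzero volume factor, a detail the paper's one-line proof leaves implicit.
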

\begin{proof}
Since the traces
of $\bu$
vanish on the faces of $s$, 
from Lemma~\ref{bubbles} we know that
\[
\bu = \sum_{j=1}^n \left(\prod_{m\neq j} \lambda_m\right)\widetilde{u}_j \nabla \lambda_j, 
\] 
where $\widetilde{u}_j\in eP_{r-n}$. Choosing $p_j = \widetilde{u}_j$ then
concludes the proof of the lemma because 
$\prod_{m\neq j} \lambda_m$ is a strictly positive function in the
interior of $s$. 
\end{proof}
\subsection*{Unisolvence result}
After all these long introductory remarks, let us recall that our goal is
to show that $ZP$ defined in \S\ref{derivation-section} is injective. 
This is equivalent to showing that if the functionals defined in~\eqref{dof}  
vanish on a differential 1-form $\omega$
with coefficients in $(eP_{k-1})^d$, then $\omega=0$.  Equivalently we
can state this in terms of vector fields as we did in
Lemma~\ref{lemma-increase-dimension}.
We have the following unisolvence result
\begin{lemma}\label{unisolvence-b}
If $\bu=\sum_{j=1}^d \exp(-\bq\cdot\bx)u_j\nabla\lambda_j$ is a vector field for which
\[
\sum_{j=1}^n\int_{s_{\sigma}} \exp(-\bq\cdot\bx) u_j p_j 
\operatorname{vol}
(\pi_{\sigma}\nabla\lambda_j,(\pi_{\sigma} \nabla\lambda_m)_{m\neq j})
= 0,\quad\mbox{for all}\quad p_j\in (P_{r-n})^n,
\]
where $n=1,\ldots,d$, $|\sigma|=n$, then $\bu=0$. 
\end{lemma}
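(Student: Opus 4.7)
The plan is to induct on $n = |\sigma|$, the dimension of the sub-simplex, mirroring the classical unisolvence proof for the polynomial \Nedelec space but with the exponential weight threaded through. The inductive hypothesis at stage $n$ will be: the tangential trace $\pi_\sigma \bu$ vanishes on every sub-simplex $s_\sigma$ of dimension strictly less than $n$. Once the induction reaches $n=d$ (so $s_\sigma = T$ and $\pi_\sigma = I$), we obtain $\bu\equiv 0$ on $T$.

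For the base case $n=1$, fix any edge $s_\sigma$ with $|\sigma|=1$ and let $j$ be the single index in $\sigma$. The only tangent direction on $s_\sigma$ is $\pi_\sigma\nabla\lambda_j$, so the volume factor in the hypothesis collapses to its (positive) length and the condition becomes
\[
\int_{s_\sigma}\exp(-\bq\cdot\bx)\,u_j(\bx)\,p(\bx)\,d\ell=0
\quad\text{for every}\;p\in P_{r-1}(s_\sigma).
\]
Since $\exp(-\bq\cdot\bx)$ is strictly positive and $u_j|_{s_\sigma}\in P_{r-1}(s_\sigma)$, the choice $p=u_j$ forces $u_j\equiv 0$ on $s_\sigma$, so the tangential trace of $\bu$ vanishes on every edge.

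For the inductive step, fix an $n$-dimensional sub-simplex $s_\sigma$ and view $\pi_\sigma\bu$ as a vector field intrinsic to $s_\sigma$. Because $\pi_\sigma\nabla\lambda_k=0$ for $k\notin\sigma$, this restriction lies in $(eP_{r-1})^n$ on $s_\sigma$ with $\{\pi_\sigma\nabla\lambda_j\}_{j\in\sigma}$ as tangent basis. By the inductive hypothesis, its traces on every lower-dimensional sub-face of $s_\sigma$ vanish, in particular on the faces $F_i=\{\lambda_i=0\}\cap s_\sigma$ for $i\in\sigma$. The moment condition in the lemma, specialized to this $\sigma$, is precisely the remaining integral hypothesis required by Lemma~\ref{lemma-increase-dimension} applied to $s_\sigma$. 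That lemma then yields $\pi_\sigma\bu=0$ on $s_\sigma$, advancing the induction.

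The main obstacle is bookkeeping rather than conceptual: one has to check that the restriction $\pi_\sigma\bu$ genuinely sits in the intrinsic quasi-polynomial space on $s_\sigma$ (with the intrinsic surface gradients $\nabla_\sigma\lambda_j$ as basis), and that the test functions $\bm{p}\in(P_{r-n})^n$ in the hypothesis match, after relabeling, the test space used in Lemma~\ref{lemma-increase-dimension}. Beyond this, the positivity of $\exp(-\bq\cdot\bx)$ is the only place the weight enters, and it serves only to preserve nondegeneracy of the weighted $L^2$ pairings, so the whole argument is parallel to the classical \Nedelec unisolvence proof in \cite{2006ArnoldD_FalkR_WintherR-aa}.
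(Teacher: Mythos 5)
Your argument is correct and is essentially the paper's own proof: the edge case is settled directly via the strict positivity of $\exp(-\bq\cdot\bx)$, and the step from dimension $n-1$ to $n$ is exactly an application of Lemma~\ref{lemma-increase-dimension} on each sub-simplex $s_\sigma$, which is the iteration the paper carries out (and leaves to the reader to formalize as induction). The only difference is that you spell out the bookkeeping about intrinsic traces and test spaces slightly more explicitly than the paper does.
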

\begin{proof}
The proof is just an iteration using the result from
Lemma~\ref{lemma-increase-dimension}. Indeed taking $n=0$ and we have
that 
\[
0=\frac{1}{|s|}\int_{s} \exp(-\bq\cdot\bx) \bu\cdot\bm{\tau}_{s} p,
\quad\mbox{for all}\quad p\in P_{r-1},
\]
where $s$ is any of the $1$-dimensional sub-simplices (edges) of
$T$. This clearly tells us that $\pi_s\bu=0$ on every edge. Since two
dimensional simplices have as boundary edges 1-dimensional simplices,
an application of Lemma~\eqref{lemma-increase-dimension} shows that
$\pi_s \bu=0$ also on all two-dimensional  sub-simplices $s$. Formalizing this
as an induction argument is trivial and left to the reader. The proof
is complete. 
\end{proof}



\end{document}